\newcommand{\frz}{\mathfrak{z}}
\DeclareMathOperator{\cl}{span}
\DeclareMathOperator{\orb}{orb}
\DeclareMathOperator{\tr}{tr}
\DeclareMathOperator{\diag}{diag}
\newcommand{\bbR}{\mathbb{R}}
\newcommand{\bbD}{\mathbb{D}}
\newcommand{\C}{\mathbb{C}}
\newcommand{\Z}{\mathbb{Z}}
\newcommand{\te}{\tilde{e}}
\newcommand{\fw}{{\mathfrak{w}}}
\newcommand{\fv}{{\mathfrak{v}}}
\newcommand{\cJ}{{\mathcal{J}}}
\theoremstyle{definition}	\newtheorem{defreflectionless}{Definition}[section]
				\newtheorem{deffingapclass}[defreflectionless]{Definition}
\theoremstyle{definition}	\newtheorem{defhtwo}[defreflectionless]{Definition}
\theoremstyle{definition}	
\theoremstyle{plain}		\newtheorem{thmonb}[defreflectionless]{Theorem}
				\newtheorem{thmmultbyz}[defreflectionless]{Theorem}
\theoremstyle{plain}		\newtheorem{thmonbsmp}[defreflectionless]{Theorem}
				\newtheorem{thmmultbyzsmp}[defreflectionless]{Theorem}
				\newtheorem{thmjacobiflowper}[defreflectionless]{Theorem}
\theoremstyle{plain}		
\theoremstyle{plain}		
				\newtheorem{thmunitriples}[defreflectionless]{Theorem}
\theoremstyle{definition}	
\theoremstyle{definition}	
\theoremstyle{plain}
\theoremstyle{plain}		
\theoremstyle{plain}		\newtheorem{thrmekt}[defreflectionless]{Theorem}
\newtheorem{theorem}{Theorem}[section]
\newtheorem{lemma}[defreflectionless]{Lemma}
\newtheorem{proposition}[defreflectionless]{Proposition}
\newtheorem{corollary}[defreflectionless]{Corollary}
\theoremstyle{definition}
\newtheorem{definition}[defreflectionless]{Definition}
\newtheorem{remark}[defreflectionless]{Remark}
\title{A Functional Model for SMP Matrices and the Jacobi Flow}
\author{Benjamin Eichinger, Florian Puchhammer, Peter Yuditskii\thanks{Supported by the Austrian Science Fund FWF, project no: P22025-N18.}}
\begin{document}

\maketitle

 \begin{abstract}
This is the second part of the paper \cite{kssmp} on the theory of SMP (Strong Moment Problem) matrices and their relation
to the Killip-Simon problem on two disjoint intervals. In this part we define and study the Jacobi flow on SMP matrices.
 \end{abstract}
This is the second part of the paper \cite{kssmp} on the theory of SMP (Strong Moment Problem) matrices and their relation
to the Killip-Simon problem on two disjoint intervals. In this part we define and study the Jacobi flow on SMP matrices. Besides
the list of references given in \cite{kssmp}, we would like to mention that an extensive bibliography of works concerned with
the strong moment problem can be found in the survey \cite{JN}; concerning its matrix generalization see \cite{Sim1,Sim2}.

The authors are thankful to Robert Ensgraber for technical support.

\section{Functional Models for Periodic SMP and \\ Isospectral Almost Periodic Jacobi Matrices}
\subsection{Finte-gap Jacobi Matrices: Functional Model}
We start with a formal definition of the class of \emph{reflectionless Jacobi matrices}. Let $J$ be a two-sided Jacobi matrix acting
on $\ell^2$
\begin{equation*}
 Je_n=a_n e_{n-1}+b_ne_n+a_{n+1}e_{n+1},
\end{equation*}
where the $e_n$'s are the vectors of the standard basis. Let $\ell^{2}_{+}=\cl_{n\geq0}\{e_n\}$ and 
$\ell^{2}_{-}=\cl_{n\leq-1}\{e_n\}$. Throughout the paper we assume that the corresponding operators are bounded. We define
\begin{equation*}
 J_{\pm}=P_{\pm}J\vert\ell^{2}_{\pm},
\end{equation*}
and the \emph{resolvent functions}
\begin{equation}\label{eqrezf}
 r_{\pm}(z)=\left\langle(J_{\pm}-z)^{-1}e_{\frac{-1\pm1}{2}},e_{\frac{-1\pm1}{2}}\right\rangle.
\end{equation}
\begin{defreflectionless}
 \label{def:reflectionless}
A Jacobi matrix $J$ is called reflectionless on a Borel set $A\subset\mathbb R$ of positive Lebesgue measure, $|A|>0$, if
\begin{equation*}
 \frac{1}{r_+(x+i0)}=\overline{a_0^2r_{-}(x+i0)}\quad\text{for almost all }x\in A.
\end{equation*}

\end{defreflectionless}

% \subsection{The space of character automorphic functions $H^2(\alpha)$}

The following isospectral sets of operators are classical in spectral theory and in the theory of integrable systems, see e.g.
\cite[pp. 781-782]{SimOPUC}.

\begin{deffingapclass}
 Let $E$ be a system of $g+1$ proper closed intervals, i.e.
\begin{equation*}
 E=[\mathbf b_0,\mathbf a_0]\setminus\bigcup_{j=1}^g(\mathbf a_j,\mathbf b_j).
\end{equation*}
The finite-gap class $J(E)$ is formed by Jacobi matrices $J$ which are reflectionless on their spectral set $E$.
\end{deffingapclass}

In what follows we will use functional models for matrices of this class in the form as considered in \cite{sodyud}. To this end we need
to define certain special functions related to function theory in
 the common resolvent domain $\Omega=\overline{\mathbb C}\setminus E$ for $J\in J(E)$.

Let $\mathbb D/\Gamma\simeq\overline{\mathbb C}\setminus E$ be a uniformization of the domain $\Omega$. It means that there exists a Fuchsian
group $\Gamma$ and a meromorpic function $\frz:\mathbb D\rightarrow\overline{\mathbb C}\setminus E$, $\frz\circ\gamma=\frz$ for all
$\gamma\in\Gamma$, such that
\begin{equation*}
 \forall z\in\overline{\mathbb C}\setminus E~\exists\zeta\in\mathbb D\!:~\frz(\zeta)=z \text{ and } \frz(\zeta_1)=\frz(\zeta_2)\Rightarrow
\zeta_1=\gamma(\zeta_2).
\end{equation*}
We assume that $\frz$ meets the normalization $\frz(0)=\infty$, $(\zeta \frz)(0)>0$.

Let $\Gamma^{*}$ be the group of characters of the discrete group $\Gamma$.

\begin{defhtwo}
 \label{def:htwo}
For $\alpha\in\Gamma^*$
we define the Hardy  space of character automorphic functions as
\begin{equation*}
H^2(\alpha) = H^2_{\Omega}(\alpha) = \{ f \in H^2\!:~ f \circ \gamma = \alpha(\gamma) f,~\gamma\in\Gamma \},
\end{equation*}
where $H^2$ denotes the standard Hardy class in $\mathbb D$.
\end{defhtwo}

Fix $z_0\in\Omega$ and let $\orb(\zeta_0)=\frz^{-1}(z_0)=\{\gamma(\zeta_0)\}_{\gamma\in\Gamma}$. The Blaschke product $b_{z_0}$ with zeros 
$\frz^{-1}(z_0)$ is called the Green function of the group $\Gamma$ (cf.~\cite{sodyud}). It is related to the standard Green
function $G(z,z_0)$ in the domain $\Omega$ by
\begin{equation*}
\log \frac{1}{|b_{z_0}(\zeta)|} = G\left(\mathfrak{z}(\zeta),z_0\right).
\end{equation*}
The function $b_{z_0}$ is character automorphic, that is, $b_{z_0}\circ\gamma=\mu_{z_0}b_{z_0}$, where $\mu_{z_0}\in\Gamma^{*}$.

We define $k_{\zeta_0}^{\alpha}(\zeta)=k^{\alpha}(\zeta,\zeta_0)$ as the reproducing kernel of the space $H^2(\alpha)$, that is,
\begin{equation*}
 \left\langle f, k_{\zeta_0}^{\alpha}\right\rangle = f(\zeta_0)\quad \forall f\in H^2(\alpha).
\end{equation*}

Let $k^{\alpha}(\zeta)=k_{0}^{\alpha}(\zeta)$, $b(\zeta)=b_{\frz(0)}(z)$, and  $\mu=\mu_{\frz(0)}$. We have an evident decomposition
\begin{equation}\label{ort1}
 H^{2}(\alpha)=\{e^{\alpha}\}\oplus b H^2(\alpha\mu^{-1}), \quad e^{\alpha}=\frac{k^{\alpha}(\zeta)}{\sqrt{k^{\alpha}(0)}}.
\end{equation}
This decomposition plays an essential role in the proof of the following theorem.

\begin{thmonb}
 \label{thm:onb}
The system of functions 
\begin{equation*}
 e_{n}^{\alpha}(\zeta)=b^{n}(\zeta)\frac{k^{\alpha\mu^{-n}}(\zeta)}{\sqrt{k^{\alpha \mu^{-n}}(0)}}
\end{equation*}

\begin{itemize}
 \item[(i)] forms an orthonormal basis in $H^2(\alpha)$ for $n\in \mathbb N$ and
 \item[(ii)] forms an orthonormal basis in $L^2(\alpha)$ for $n\in\mathbb Z$,
\end{itemize}
where 
\begin{equation*}
L^2(\alpha) =  \{ f \in L^2\!:~ f \circ \gamma = \alpha(\gamma) f,~\gamma\in\Gamma \}.
\end{equation*}
\end{thmonb}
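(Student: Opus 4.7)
For part (i), my plan is to iterate the given decomposition (\ref{ort1}). Applying (\ref{ort1}) successively to $H^2(\alpha)$, $H^2(\alpha\mu^{-1})$, $H^2(\alpha\mu^{-2})$, \dots\ and multiplying by the appropriate power of $b$ (which is an $L^2$-isometry since $|b|=1$ on the boundary of $\mathbb D$), after $N$ steps one obtains
\begin{equation*}
H^2(\alpha) = \bigoplus_{k=0}^{N-1}\C\, e_k^\alpha \;\oplus\; b^N H^2(\alpha\mu^{-N}).
\end{equation*}
This displays $\{e_k^\alpha\}_{k\geq 0}$ as an orthonormal system. Completeness follows because any $f\in H^2(\alpha)$ orthogonal to every $e_k^\alpha$ lies in each $b^N H^2(\alpha\mu^{-N})$, and since $b$ has a simple zero at the origin, $f$ then vanishes at $0$ to every order, hence identically.

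For part (ii), I would first check by direct reproducing-kernel computations that each $e_{-n}^\alpha$, $n\geq 1$, lies in $L^2(\alpha)\ominus H^2(\alpha)$ and that $\{e_{-n}^\alpha\}_{n\geq 1}$ is orthonormal. Using $|b|=1$ on the boundary (so $b^{-n}=\bar b^n$ there) and the reproducing property, for $f\in H^2(\alpha)$ the inner product $\langle e_{-n}^\alpha, f\rangle_{L^2}$ reduces to a constant multiple of $\overline{(b^n f)(0)}$, which is zero because $b^n(0)=0$ for $n\geq 1$; an analogous manipulation gives mutual orthogonality of the $e_{-n}^\alpha$ and unit norms. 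Combining with (i) and noting that $b^{-N}$ is a unitary $L^2(\alpha\mu^N)\to L^2(\alpha)$, the identity $b^{-N}e_k^{\alpha\mu^N}=e_{k-N}^\alpha$ then shows that $\{e_m^\alpha\}_{m\geq -N}$ is an orthonormal basis of the nested subspace $b^{-N}H^2(\alpha\mu^N)\subset L^2(\alpha)$.

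The only non-routine step---and the main obstacle---is the density
\begin{equation*}
\overline{\bigcup_{N\geq 0} b^{-N}H^2(\alpha\mu^N)} = L^2(\alpha).
\end{equation*}
This is the character automorphic analogue of the classical $\overline{\bigcup_{N\geq 0}z^{-N}H^2}=L^2(\partial\mathbb D)$. My plan to establish it is via the dual decomposition $L^2(\beta)\ominus H^2(\beta) = \bar b\,\overline{H^2(\beta')}$ for a suitable character $\beta'$ depending on $\beta$, which is a standard fact from the character automorphic Hardy space theory of \cite{sodyud}. A function $g\in L^2(\alpha)$ orthogonal to the union would satisfy $b^n g \in L^2(\alpha\mu^n)\ominus H^2(\alpha\mu^n) = \bar b\,\overline{H^2(\cdot)}$ for every $n\geq 0$; translating via the above identity and iterating the argument of (i) then forces $\bar g$ to vanish at $0$ to every order, so $g\equiv 0$, which completes the proof.
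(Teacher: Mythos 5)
Your proposal is correct and follows essentially the same route as the paper: part (i) is exactly the iteration of the decomposition \eqref{ort1} that the paper refers to, and part (ii) ultimately rests on the description of $L^2(\alpha)\ominus H^2(\alpha)$ from the character-automorphic Hardy space theory of \cite{sodyud}, which is precisely the content the paper outsources by citing \cite[Theorem E]{sodyud} (and invokes again for Theorem~\ref{thm:onbsmp}). So your argument is a legitimate unwinding of that citation rather than a genuinely different proof.
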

\begin{proof}
 Item (i) follows from the above paragraphs and a proof for (ii) in a much more general case can be found in \cite[Theorem E]{sodyud}.
\end{proof}

The following theorem describes all elements of $J(E)$ with a given finite gap set $E$.

\begin{thmmultbyz}
 \label{thm:multbyz}
The multiplication operator by $\frz$ in $L^2(\alpha)$ with respect to the basis $\{e_n^{\alpha}\}$ from Theorem~\ref{thm:onb}
is the following Jacobi matrix $J=J(\alpha)$:
\begin{equation*}
 \frz e_{n}^{\alpha}=a_{n}(\alpha)e_{n-1}^{\alpha} + b_n(\alpha)e_{n}^{\alpha}+a_{n+1}(\alpha)e^{\alpha}_{n+1},
\end{equation*}
where
\begin{equation*}
 a_n(\alpha)=\mathcal A(\alpha\mu^{-n}), \quad\mathcal A(\alpha)=(\frz b)(0)\sqrt{\frac{k^{\alpha}(0)}{k^{\alpha\mu}(0)}}
\end{equation*}
and 
\begin{equation*}
 b_n(\alpha)=\mathcal B(\alpha\mu^{-n}),~~ \mathcal B(\alpha)=\frac{\frz b(0)}{b^{\prime}(0)}+
\left\{  \frac{\left(k^{\alpha}\right)^{\prime}(0)}{k^{\alpha}(0)}- \frac{\left(k^{\alpha\mu}\right)^{\prime}(0)}{k^{\alpha\mu}(0)}\right\}
+\frac{\left(\frz b\right)^{\prime}(0)}{b^{\prime}(0)}.
\end{equation*}
This Jacobi matrix $J(\alpha)$ belongs to $J(E)$. Thus, we have a map from $\Gamma^{*}$ to $J(E)$. Moreover, this map is one-to-one.
\end{thmmultbyz}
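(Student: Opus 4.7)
The plan is to verify the tridiagonal form first, then extract the scalar coefficients by Taylor expansion at $0$, and finally address membership in $J(E)$ and the bijectivity.

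\textbf{Tridiagonality.} First I observe that $\frz b\in H^\infty$ with character $\mu$: indeed $\frz$ is automorphic and has a simple pole at $0$ (with $(\zeta\frz)(0)>0$), while $b$ has a simple zero at $0$, so $\frz b$ is regular and nonvanishing at $0$ with character inherited from $b$. Iterating the decomposition \eqref{ort1}, one sees that $\cl\{e_n^\alpha:n\geq N\}=b^N H^2(\alpha\mu^{-N})$ as subspaces of $L^2(\alpha)$. Multiplication by $\frz=(\frz b)/b$ therefore sends $b^N H^2(\alpha\mu^{-N})$ into $b^{N-1}H^2(\alpha\mu^{-(N-1)})=\cl\{e_n^\alpha:n\geq N-1\}$, so $\langle\frz e_n^\alpha,e_m^\alpha\rangle=0$ for $m\leq n-2$. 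Since $\frz$ is real on $\partial\mathbb D$ (where $|b|=1$ and $\frz(\partial\mathbb D)\subset E\subset\mathbb R$), multiplication by $\frz$ is self-adjoint on $L^2(\alpha)$, which gives the symmetric bound $m\geq n+2$ as well. Hence the matrix is tridiagonal with real entries.

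\textbf{Computation of $a_n(\alpha), b_n(\alpha)$.} I would exploit the shift covariance: the multiplication-by-$b^{-n}$ map $L^2(\alpha)\to L^2(\alpha\mu^{-n})$ is unitary, commutes with multiplication by $\frz$, and sends $e_k^\alpha$ to $e_{k-n}^{\alpha\mu^{-n}}$. Consequently $a_n(\alpha)=\mathcal A(\alpha\mu^{-n})$ and $b_n(\alpha)=\mathcal B(\alpha\mu^{-n})$ where $\mathcal A(\alpha)=\langle\frz e_0^\alpha,e_{-1}^\alpha\rangle$ and $\mathcal B(\alpha)=\langle\frz e_0^\alpha,e_0^\alpha\rangle$. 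To read off these matrix elements, multiply the expansion $\frz e_0^\alpha=\mathcal A(\alpha)e_{-1}^\alpha+\mathcal B(\alpha)e_0^\alpha+a_1 e_1^\alpha$ by $b$, which turns it into an identity in $H^2(\alpha\mu)$:
\begin{equation*}
 (\frz b)\frac{k^\alpha}{\sqrt{k^\alpha(0)}}=\mathcal A(\alpha)\frac{k^{\alpha\mu}}{\sqrt{k^{\alpha\mu}(0)}}+\mathcal B(\alpha)b\frac{k^\alpha}{\sqrt{k^\alpha(0)}}+a_1 b^2\frac{k^{\alpha\mu^{-1}}}{\sqrt{k^{\alpha\mu^{-1}}(0)}}.
\end{equation*}
Evaluating at $\zeta=0$ kills the last two terms (because $b(0)=0$) and produces the formula for $\mathcal A(\alpha)$ immediately from $k^{\alpha\mu}(0)$. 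Differentiating once at $\zeta=0$ kills only the $b^2$-term, and yields a linear equation for $\mathcal B(\alpha)$ whose solution is the stated combination of $(\frz b)(0)/b'(0)$, $(\frz b)'(0)/b'(0)$ and the logarithmic derivatives of $k^\alpha,k^{\alpha\mu}$ at $0$. This is a direct reproducing-kernel computation and is the routine heart of the argument.

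\textbf{Membership in $J(E)$.} Since the essential spectrum of multiplication by $\frz$ on $L^2(\alpha)$ equals $\frz(\partial\mathbb D)=E$, the Jacobi matrix $J(\alpha)$ has spectrum $E$. To check that $J(\alpha)$ is reflectionless on $E$, I would compute the half-line resolvent functions $r_\pm$ by realizing $J_+$ as the compression of multiplication by $\frz$ to $H^2(\alpha)=\cl\{e_n^\alpha:n\geq 0\}$, and $J_-$ as the compression to the orthogonal complement in $L^2(\alpha)$. The diagonal matrix elements of the resolvents with $e_0^\alpha$ and $e_{-1}^\alpha$ can be expressed in terms of reproducing kernels of $H^2(\alpha)$ and its complement in $L^2(\alpha)$; the identity $\frac{1}{r_+(x+i0)}=\overline{a_0^2 r_-(x+i0)}$ on $E$ then follows from the fact that on $\partial\mathbb D$ these two kernels are boundary values of mutually conjugate character-automorphic functions, which is built into the symmetry of the $L^2(\alpha)$-decomposition.

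\textbf{Bijectivity.} Injectivity follows because distinct $\alpha$ produce distinct sequences $\mathcal A(\alpha\mu^{-n}), \mathcal B(\alpha\mu^{-n})$; more conceptually, the character $\alpha$ can be recovered from $J(\alpha)$ via the standard Abel-map parametrization of $J(E)$ through the Dirichlet data on the gaps, which can be read off from the reproducing kernels. Surjectivity is the classical inverse spectral result for finite-gap Jacobi matrices: every $J\in J(E)$ is uniquely determined by its Dirichlet divisor modulo the period lattice of $E$, and this torus is identified with $\Gamma^*$. The main obstacle in the whole theorem is really this last step—verifying that the reflectionless condition plus the $J(E)$-class statement together force the map $\alpha\mapsto J(\alpha)$ to be onto—and I would invoke the inverse spectral theorem of the finite-gap theory (as in \cite{sodyud}) rather than reconstruct it here.
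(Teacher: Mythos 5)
The paper itself gives no proof of Theorem~\ref{thm:multbyz}: it is imported wholesale from the functional model of \cite{sodyud} (cf.\ the sentence preceding Definition~\ref{def:htwo}), so there is no internal argument to compare against line by line. Your proposal supplies the standard direct argument, and its computational core is correct: $\frz b\in H^\infty$ with character $\mu$, the iterated decomposition $\cl\{e_n^\alpha: n\ge N\}=b^NH^2(\alpha\mu^{-N})$, self-adjointness of multiplication by $\frz$ give tridiagonality; the shift covariance under multiplication by $b^{-n}$ reduces everything to $n=0$; and multiplying the three-term relation by $b$ and evaluating, respectively differentiating, at $\zeta=0$ yields $\mathcal A(\alpha)=(\frz b)(0)\sqrt{k^\alpha(0)/k^{\alpha\mu}(0)}$ and
\begin{equation*}
\mathcal B(\alpha)=\frac{(\frz b)'(0)}{b'(0)}+\frac{(\frz b)(0)}{b'(0)}\left\{\frac{(k^{\alpha})'(0)}{k^{\alpha}(0)}-\frac{(k^{\alpha\mu})'(0)}{k^{\alpha\mu}(0)}\right\},
\end{equation*}
i.e.\ your procedure in fact shows that the bracket in the displayed formula of the theorem should be multiplied by $(\frz b)(0)/b'(0)$ rather than added -- a useful by-product. (Two tiny points you leave implicit but which follow at once from your formulas: $a_n>0$ because $(\frz b)(0)>0$ under the normalization $(\zeta\frz)(0)>0$, $b'(0)>0$, and boundedness of $J(\alpha)$ because $\frz$ has boundary values in the compact set $E$.)

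Where your argument is thinner is exactly where the paper is silent: the reflectionless property and the bijectivity. Your sketch for $r_\pm$ (``mutually conjugate character-automorphic kernels'') is not yet a proof -- to close it one needs the explicit expressions of $r_\pm$ through reproducing kernels and the description of $L^2(\alpha)\ominus H^2(\alpha)$, which is precisely the content of \cite{sodyud}; likewise the sentence ``distinct $\alpha$ produce distinct coefficient sequences'' is not an argument by itself, and injectivity/surjectivity really rest on the Abel-map/Dirichlet-data parametrization of $J(E)$, i.e.\ on the same inverse spectral theorem you (and the authors) delegate to \cite{sodyud}. So in effect your proposal proves directly the part of the statement that is a finite computation and cites the literature for the inverse-spectral part, which is consistent with -- and more detailed than -- what the paper does.
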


\begin{remark}
An easy consequence of the above functional model is the following important relation
\begin{equation}\label{sjm}
S^{-1}J(\alpha) S=J(\mu^{-1}\alpha), \quad S e_n:= e_{n+1}.
\end{equation}
In particular, $J(\alpha)$ is periodic if and only if $\mu^N=1_{\Gamma^*}$ for a certain positive integer $N$.
\end{remark}

\subsection{Periodic SMP matrices}

Let $\tilde E$ be a system of two proper closed intervals.  By a linear change of variables we can pass to the system of intervals $E=[\mathbf b_0,\mathbf a_0]\setminus (\mathbf a_1,\mathbf b_1)$ 
such that
\begin{equation}\label{edef}
E=V^{-1}([-2,2])
\end{equation}
with an appropriate function
\begin{equation*}
V(z)=\frak a z+\frak b -\frac 1 z, \quad \frak a>0,\ \frak b\in\bbR.
\end{equation*}

Let further $A(E)$ denote the set of all period-two SMP matrices with their spectrum on $E$.
Recall that in our previous paper \cite{kssmp} we characterized the elements of $A(E)$ by a magic formula $V(A)=S^2+S^{-2}$.

By definition $V\!:\Omega\rightarrow \overline{\mathbb C}\setminus[-2,2]$, as before, $\Omega=\bar\C\setminus E$.
On the other hand, for the classical Joukowski map we have 
 $\Delta+\Delta^{-1}\!:\mathbb D\rightarrow\overline{\mathbb C}\setminus[-2,2]$. Thus, a single-valued function $\Delta(z)$ is uniquely defined in $\Omega$ by
 \begin{equation}\label{defdelta}
 \Delta(z)+\frac 1{\Delta(z)}=V(z), \  |\Delta(z)|<1, \quad z\in\Omega.
\end{equation}
Let us list its characteristic properties
\begin{itemize}
 \item[(i)] $|\Delta|<1$ in $\Omega$ and $|\Delta|=1$ on $E$,
 \item[(ii)] $\Delta(\infty)=\Delta(0)=0$, otherwise $\Delta(z)\not=0$.
\end{itemize}
All this implies that $\Delta(z)$ is given by $\Delta(\frz(\zeta))=b_{0}(\zeta)b_{\infty}(\zeta)$. In particular, $\mu_0=\mu^{-1}$,
recall that $\mu=\mu_\infty$.

\begin{remark}
We want to mention that the uniformization of the domain 
\begin{equation*}
\mathfrak{z} : \mathbb{C_+}/\Gamma \simeq \Omega,\quad \C_+=\{z: \Im z>0\}
\end{equation*}
in the two interval case can be given explicitly \cite[Chapter VIII]{AKHef}.
Namely,  $\Gamma$ is a discrete group of dilations, $\Gamma=\{\gamma_n\!: \gamma_n(w) = \rho^n w,~n \in \mathbb{Z}\}$, where
\begin{equation*}
\rho=\exp \left [\frac{2 \pi \int_{\mathbf a_0}^{\mathbf b_1} \frac{dz}{\sqrt{|(z-\mathbf b_0) (z-\mathbf b_1) (z-\mathbf a_0) (z-\mathbf a_1)|}}}{\int_{\mathbf a_0}^{\mathbf b_0} \frac{dz}{\sqrt{|(z-\mathbf b_0) (z-\mathbf b_1) (z-\mathbf a_0) (z-\mathbf a_1)|}}}\right ],
\end{equation*}
and $z=\frz(w)$ is the inverse of the function $w(z)$ given in the upper half plane by
\begin{equation*}
w=\exp \left [\frac{i \pi \int_{\mathbf a_0}^{z} \frac{dz}{\sqrt{(z-\mathbf b_0) (z-\mathbf b_1) (z-\mathbf a_0) (z-\mathbf a_1)}}}{\int_{\mathbf a_0}^{\mathbf b_0} \frac{dz}{\sqrt{|(z-\mathbf b_0) (z-\mathbf b_1) (z-\mathbf a_0) (z-\mathbf a_1)|}}}\right ]
\end{equation*}
and extended to $\C_+$ of the $w$-plane by the symmetry principle. All corresponding functions as well as the Green functions and the reproducing kernels can be written explicitly by means of elliptic functions.
We will not give these explicit formulas here, however.
\end{remark}

Let us fix $\zeta_*\in\bbD$ such that $\frz(\zeta_*)=0$ and $\gamma_1(\zeta_*)=\bar \zeta_*$ for the generator $\gamma_1$ of the group $\Gamma$. In order to construct a functional model for operators from $A(E)$ we start with the following counterpart of the orthogonal decomposition \eqref{ort1}:
\begin{equation}\label{smpbase0}
H^2(\alpha)=\{k^{\alpha}_{\zeta_*}, k^{\alpha}_0\}\oplus\Delta H^2(\alpha)
=\{f^{\alpha}_0\}\oplus\{f^{\alpha}_1\}\oplus\Delta H^2(\alpha),
\end{equation}
where
\begin{equation}\label{smpbase}
f_0^{\alpha}=\frac{\sqrt{\alpha^{-1}(\gamma_1)}k^\alpha_{\zeta_*}}{\sqrt{k^\alpha_{\zeta_*}(\zeta_*)}}, \quad
f_1^\alpha=\frac{b_{\frz(\zeta_*)} k^{\alpha\mu}}{\sqrt{k^{\alpha\mu}(0)}}.
\end{equation}

\begin{thmonbsmp}
 \label{thm:onbsmp}
The system of functions 
\begin{equation*}
f_{n}^{\alpha}=\begin{cases}\Delta^m f_0^\alpha,& n=2m\\
\Delta^m f_1^\alpha,& n=2m+1
\end{cases}
\end{equation*}
\begin{itemize}
 \item[(i)] forms an orthonormal basis in $H^2(\alpha)$ for $n\in \mathbb N$ and
 \item[(ii)] forms an orthonormal basis in $L^2(\alpha)$ for $n\in\mathbb Z$.
\end{itemize}
\end{thmonbsmp}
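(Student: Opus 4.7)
The strategy parallels the proof of Theorem~\ref{thm:onb}: the plan is to iterate the orthogonal decomposition \eqref{smpbase0}, with $\Delta=b_{\frz(\zeta_*)}\,b$ (an inner function with two simple zeros per fundamental domain) playing the role that $b$ alone played there.

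First I would verify \eqref{smpbase0}. Since $\Delta\circ\frz=b_0 b_\infty$ is an inner function with simple zeros exactly at the orbits of $0$ and $\zeta_*$, the subspace $\Delta H^2(\alpha)$ coincides with $\{f\in H^2(\alpha):f(0)=f(\zeta_*)=0\}$, a codimension-two subspace whose orthogonal complement is $\{k^\alpha_0,k^\alpha_{\zeta_*}\}$. Next I would check that $\{f_0^\alpha,f_1^\alpha\}$ is an orthonormal basis of this complement. The normalization of $f_0^\alpha$ is immediate from the reproducing property of $k^\alpha_{\zeta_*}$ (the phase $\sqrt{\alpha^{-1}(\gamma_1)}$ is unimodular, hence irrelevant here). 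For $f_1^\alpha$, since the character of $b_{\frz(\zeta_*)}$ is $\mu_0=\mu^{-1}$, one has $f_1^\alpha\in H^2(\alpha)$, and $\|f_1^\alpha\|=1$ follows from $|b_{\frz(\zeta_*)}|=1$ on $\partial\bbD$ together with the reproducing property of $k^{\alpha\mu}$. The key computation is orthogonality to $\Delta H^2(\alpha)$: for any $g\in H^2(\alpha)$,
\begin{equation*}
\sqrt{k^{\alpha\mu}(0)}\,\langle f_1^\alpha,\Delta g\rangle=\langle b_{\frz(\zeta_*)} k^{\alpha\mu},b_{\frz(\zeta_*)} b g\rangle=\langle k^{\alpha\mu},bg\rangle=\overline{(bg)(0)}=0,
\end{equation*}
using $|b_{\frz(\zeta_*)}|^2=1$ on $\partial\bbD$, the fact that $bg\in H^2(\alpha\mu)$, and $b(0)=0$. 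The mutual orthogonality $\langle f_0^\alpha,f_1^\alpha\rangle=0$ is clear from $b_{\frz(\zeta_*)}(\zeta_*)=0$ together with the reproducing property of $k^\alpha_{\zeta_*}$.

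Having \eqref{smpbase0}, I would iterate. Since $|\Delta|=1$ on $\partial\bbD$, multiplication by $\Delta^m$ is an isometry preserving orthogonality, so applying \eqref{smpbase0} inside each $\Delta^m H^2(\alpha)$ yields
\begin{equation*}
H^2(\alpha)=\cl\{f_0^\alpha,f_1^\alpha,\ldots,f_{2M-1}^\alpha\}\oplus\Delta^M H^2(\alpha),\quad M\ge 1,
\end{equation*}
and completeness in $H^2(\alpha)$ reduces to the standard fact $\bigcap_M\Delta^M H^2(\alpha)=\{0\}$ for the inner function $\Delta\circ\frz$. For part (ii), $\Delta$ acts unitarily on $L^2(\alpha)$, so the system extends to $n\in\Z$ while remaining orthonormal, and density in $L^2(\alpha)$ follows exactly as for Theorem~\ref{thm:onb}(ii), via the argument in \cite[Theorem E]{sodyud}.

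The main obstacle is the orthogonality computation for $f_1^\alpha$ displayed above: one must track three character twists ($\alpha$, $\alpha\mu$, $\mu^{-1}$) and verify that after the cancellation $|b_{\frz(\zeta_*)}|^2=1$ on $\partial\bbD$, the pairing collapses to an evaluation at $0$ of a function in the correct character class $H^2(\alpha\mu)$. Once this is established, the iteration and the inner-function completeness argument are routine.
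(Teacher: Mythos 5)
Your proof is correct and follows essentially the same route as the paper, whose proof of Theorem~\ref{thm:onbsmp} is just the observation that (i) follows from the decomposition \eqref{smpbase0} (iterated via multiplication by the inner function $\Delta\circ\frz$) and that (ii) relies on the description of $L^2(\alpha)\ominus H^2(\alpha)$ from \cite{sodyud}. You have merely filled in the verification of \eqref{smpbase0} — the character bookkeeping for $f_0^\alpha,f_1^\alpha$, the orthogonality computations, and the fact $\bigcap_M\Delta^M H^2(\alpha)=\{0\}$ — all of which is accurate.
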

\begin{proof}
 Item (i) follows from \eqref{smpbase0} and for (ii) we have to use the description of the orthogonal complement 
 $L^2(\alpha)\ominus H^2(\alpha)$, see \cite{sodyud}.
\end{proof}

Similarly as we had before, this allows us to parametrize all elements of $A(E)$ for a given two-interval set $E$ by the 
characters from $\Gamma^*$.

\begin{thmmultbyzsmp}
 \label{thm:multbyzsmp}
Let $E$ be the union of two closed intervals. Then
the multiplication operator by $\frz$ with respect to the basis $\{f_n^{\alpha}\}$ is an SMP matrix $A(\alpha)\in A(E)$,
 \begin{eqnarray*}
\frz f_{2m}^\alpha=& p_0(\alpha)  f_{2m-1}^\alpha+q_0(\alpha) f_{2m}^\alpha +p_1(\alpha) f_{2m+1}^\alpha\\
\frz f_{2m-1}^\alpha=&r_1(\alpha)  f_{2m-3}^\alpha +p_1(\alpha)  f_{2m-2}^\alpha+q_1(\alpha) f_{2m-1}^\alpha +p_0(\alpha) f_{2m}^\alpha+r_1(\alpha)  f_{2m+1}^\alpha
\end{eqnarray*}
Moreover, this map $\Gamma^*\to A(E)$ is one-to-one up to the identification 
$$
(p_0,p_1)\mapsto (-p_0,-p_1) \ \text{in} \ A(E).
$$
\end{thmmultbyzsmp}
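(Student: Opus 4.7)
The argument rests on two structural observations plus a symmetry argument.

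\emph{Observation 1.} By construction $\Delta f_n^\alpha = f_{n+2}^\alpha$, so multiplication by $\Delta$ is the two-step shift $S^2$ in the basis $\{f_n^\alpha\}$. Combined with $\Delta+\Delta^{-1}=V(\frz)$ from \eqref{defdelta}, this gives the magic formula $V(A(\alpha)) = S^2 + S^{-2}$ for free; hence, by the characterization of $A(E)$ from \cite{kssmp}, $A(\alpha)\in A(E)$ as soon as the pentadiagonal shape is established. Since $|\Delta|=1$ on $\partial\bbD$, we also get the period-two property $A_{m+2,n+2}=A_{m,n}$.

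\emph{Observation 2.} $\Delta\frz$ lies in $H^\infty(\bbD)$, with a zero on the orbit of $\zeta_*$: the simple zero of $\Delta=b_0 b_\infty$ at $\zeta=0$ cancels the simple pole of $\frz$ there, on $\partial\bbD$ we have $|\Delta\frz|=|\frz|\le \max_{E}|z|$, and $b_0(\zeta_*)=0$ gives the zero at $\zeta_*$. Therefore $\Delta\frz f_j^\alpha \in H^2(\alpha)$ for $j=0,1$, giving the one-sided support bound
\[
M_j:=\{n : A_{n,j}\neq 0\}\subset\{n\geq -2\}.
\]
Expanding $\Delta\frz f_j^\alpha=\sum_{n\geq 0} c_n^{(j)} f_n^\alpha$, the vanishing at $\zeta_*$ combined with the fact that among the $f_n^\alpha$ only $f_0^\alpha$ is nonzero at $\zeta_*$ (each other $f_n^\alpha$ contains a factor of $b_0$ or $\Delta$) forces $c_0^{(j)}=0$, so $A_{-2,j}=0$ and the bound improves to $M_j\subset\{n\geq -1\}$.

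The matching upper bound on $M_j$ follows from real-symmetry $A_{m,n}=A_{n,m}$ (as $\frz$ is real-valued on $\partial\bbD$) combined with the period-two property: for $n\ge 2$,
\[
n\in M_0 \ \Longleftrightarrow\ 0\in M_n = M_{n\bmod 2} + 2\lfloor n/2\rfloor \ \Longleftrightarrow\ -2\lfloor n/2\rfloor\in M_{n\bmod 2},
\]
which is excluded for all such $n$; hence $M_0\subset\{-1,0,1\}$. The analogous computation gives $M_1\subset\{-1,0,1,2,3\}$. These are exactly the supports of the claimed three- and five-term relations. The coincidences of coefficients at symmetric positions (the two copies of $r_1$, of $p_0$, and of $p_1$) then follow directly from $A_{m,n}=A_{n,m}$ together with the period-two relation.

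Finally, the injectivity modulo $(p_0,p_1)\mapsto(-p_0,-p_1)$ is clean: the only indeterminacy in \eqref{smpbase} is the sign of the square roots normalizing $f_0^\alpha$ and $f_1^\alpha$, and flipping the sign of $f_1^\alpha$ conjugates $A(\alpha)$ by $\diag(\ldots,1,-1,1,-1,\ldots)$, which negates precisely $p_0$ and $p_1$. Modulo this, $\alpha$ is recovered from the spectral data of $A(\alpha)$ via the character-automorphic/reflectionless correspondence of \cite{kssmp}. The main obstacle is the coordinated use of the $H^\infty$-bound (from the pole cancellation at $0$), the vanishing at $\zeta_*$, and self-adjointness to pin down the support on both sides of the diagonal; once these ingredients combine as above, the pentadiagonal structure drops out and the identification of the coefficients is a direct computation.
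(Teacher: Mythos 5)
Your route differs from the paper's: the paper simply cites \cite[Theorem 4.2]{ionela} and \cite[Section 3.3]{kssmp} for the five-diagonal structure and the injectivity, and devotes its entire proof to a single point, namely that under the normalization \eqref{smpbase} the entry $p_0(\alpha)$ is \emph{real}. Your self-contained derivation of the shape (periodicity from $\Delta f_n^\alpha=f_{n+2}^\alpha$ and $|\Delta|=1$ on the boundary, the lower support bound from $\Delta\frz$ being bounded analytic plus the vanishing at $\zeta_*$, the upper bound from symmetry and periodicity, and the magic formula from \eqref{defdelta}) is a legitimate substitute for that citation, up to one small repair: bounded boundary values alone do not place $(\Delta\circ\frz)\,\frz$ in $H^\infty(\bbD)$; either invoke the Smirnov class, or, more simply, observe that $z\Delta(z)$ is holomorphic on $\Omega$ (including at $\infty$) and continuous up to $E$, hence bounded on the compact set $\overline{\Omega}\subset\overline{\C}$, and pull back by $\frz$.

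There is, however, a genuine gap exactly where the paper's proof does its work. Real-valuedness of $\frz$ on $E$ gives only Hermitian symmetry $A_{m,n}=\overline{A_{n,m}}$, not $A_{m,n}=A_{n,m}$. With Hermitian symmetry and two-periodicity your argument yields, e.g., $A_{2m,2m-1}=\overline{A_{-1,0}}$ and $A_{2m+1,2m-1}=\overline{A_{-1,1}}$, so the ``two copies'' of $p_0$, $p_1$, $r_1$ in the five-term relation are a priori only complex conjugates of one another, and the resulting matrix need not be an SMP matrix (whose defining parameters are real); hence the magic-formula characterization of $A(E)$ from \cite{kssmp} cannot yet be applied. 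Realness is not a consequence of pentadiagonality, self-adjointness, periodicity and $V(A)=S^2+S^{-2}$: conjugating an element of $A(E)$ by a $2$-periodic unimodular diagonal matrix preserves all of these while destroying realness of the $p$-entries. It is a consequence of the specific normalization in \eqref{smpbase}: one computes $\overline{p_0(\alpha)}$ as a positive multiple of $\sqrt{\alpha(\gamma_1)}\,k^\alpha(\zeta_*)$ and uses $\overline{k^\alpha(\bar\zeta)}=k^\alpha(\zeta)$ together with $\gamma_1(\zeta_*)=\bar\zeta_*$ to see that this quantity is real, the residual freedom $\pm\sqrt{\alpha(\gamma_1)}$ being precisely the source of the identification $(p_0,p_1)\mapsto(-p_0,-p_1)$ — which in your proposal is attributed to a sign choice without realness having been established first. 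Adding this computation (it is the whole of the paper's proof) closes the gap; the injectivity itself is in both treatments delegated to \cite{ionela} and \cite{kssmp}.
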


\begin{proof}
 See \cite[Theorem 4.2]{ionela} and also \cite[Section 3.3]{kssmp}. We only need to check that, under the normalization \eqref{smpbase}, $p_0(\alpha)$ is real. We have
 $$
 \overline{p_0(\alpha)}=\langle  f_{-1}^\alpha, \frz f_0^\alpha\rangle=(b\frz)(0)\sqrt{\alpha(\gamma_1)}\frac{k^\alpha(\zeta_*,0)}{\sqrt{k^{\alpha\mu}_{\zeta_*}(\zeta_*)k^\alpha(0)}}.
 $$
 Since $\overline{k^\alpha(\bar \zeta)}=k^\alpha(\zeta)$ we get
 $$
k^\alpha(\zeta_*)= \overline{k^\alpha(\bar\zeta_*)}= \overline{k^\alpha(\gamma_1(\zeta_*))}= 
 \overline{\alpha(\gamma_1)k^\alpha(\zeta_*)}.
 $$
 Therefore, $\sqrt{\alpha(\gamma_1)}k^\alpha(\zeta_*)$ is real. Note that the square root of $\alpha(\gamma_1)$ is defined up to the multiplier $\pm 1$.
\end{proof}

\begin{remark}
It is evident that, in this basis, multiplication by $\Delta$  is the shift $S^2$, $\Delta f_n^{\alpha}=f^\alpha_{n+2}$. Thus, the magic formula for SMP matrices corresponds to the definition \eqref{defdelta} indeed. In particular, $\frak a r_1(\alpha)=1$.
\end{remark}

\subsection{Jacobi flow on periodic SMP matices}
In fact all operators in $J(E)$, as well as in $A(E)$, are unitarily equivalent. But there is a more delicate form of an equivalence relation, which allows to distinguish between all of them.

\begin{definition}\label{deftry}
Let $\mathbf A$ be a self-adjoint operator in a Hilbert space $\mathbf H$, and let the vectors 
$\mathbf e_{-1}, \mathbf e_{0}$ form a cyclic subspace. We say that the triple 
$\{\mathbf H, \mathbf A,\{\mathbf e_{-1}, \mathbf e_{0}\}\}$ is equivalent to another triple $\{\mathbf H', \mathbf A',\{\mathbf e'_{-1}, \mathbf e'_{0}\}\}$ of this sort if there exists a unitary operator $\mathbf U:\mathbf H\to\mathbf H'$ such that
\begin{equation*}
\mathbf e'_{-1}=\mathbf U\mathbf e_{-1}, \ \mathbf e'_{0}=\mathbf U\mathbf e_{0},\ \text{and}\
\mathbf A' \mathbf U=\mathbf U \mathbf A.
\end{equation*}
\end{definition}

\begin{proposition}\label{propuep}
The triples
\begin{equation*}
\{\ell^2, J(\alpha), \{e_{-1}, e_0\}\}\quad\text{and}\quad \{\ell^2, A(\alpha), \{e_{-1}, \tilde e_0(\alpha)\}\},
\end{equation*}
 where 
\begin{equation}\label{tildee0}
 \tilde e_0(\alpha)=\frac{p_0(\alpha)}{\sqrt{p_0^2(\alpha)+r_1^2(\alpha)}}e_0+\frac{r_1(\alpha)}{\sqrt{p_0^2(\alpha)+r_1^2(\alpha)}}e_1,
\end{equation}
are unitarily equivalent.
\end{proposition}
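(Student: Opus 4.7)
The plan is to exhibit $\mathbf U$ as the natural change-of-basis between the two orthonormal bases of $L^2(\alpha)$ furnished by Theorems~\ref{thm:onb} and~\ref{thm:onbsmp}. Let $W_J,W_A\colon\ell^2\to L^2(\alpha)$ be the unitaries defined by $e_n\mapsto e_n^\alpha$ and $e_n\mapsto f_n^\alpha$, respectively, and set $\mathbf U:=W_A^{-1}W_J$. Since multiplication by $\frz$ is represented by $J(\alpha)$ in the first basis and by $A(\alpha)$ in the second, the intertwining $A(\alpha)\mathbf U=\mathbf U J(\alpha)$ is automatic, and the entire problem reduces to expanding $e_{-1}^\alpha$ and $e_0^\alpha$ in the basis $\{f_n^\alpha\}$.

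The key algebraic observation is that the two bases already share their $n=-1$ vector: using $\Delta=b\cdot b_{\frz(\zeta_*)}$ one computes
\[
 f_{-1}^\alpha=\Delta^{-1}f_1^\alpha=\bigl(b\,b_{\frz(\zeta_*)}\bigr)^{-1}\,b_{\frz(\zeta_*)}\,\frac{k^{\alpha\mu}}{\sqrt{k^{\alpha\mu}(0)}}=b^{-1}\frac{k^{\alpha\mu}}{\sqrt{k^{\alpha\mu}(0)}}=e_{-1}^\alpha,
\]
so $\mathbf U e_{-1}=e_{-1}$ with no further work. To identify $\mathbf U e_0$, I would apply multiplication by $\frz$ to this common vector and expand the result in each basis:
\[
 a_{-1}(\alpha)e_{-2}^\alpha+b_{-1}(\alpha)e_{-1}^\alpha+a_0(\alpha)e_0^\alpha=r_1 f_{-3}^\alpha+p_1 f_{-2}^\alpha+q_1 f_{-1}^\alpha+p_0 f_0^\alpha+r_1 f_1^\alpha.
\]
Projecting both sides onto the common subspace $H^2(\alpha)=\cl\{e_n^\alpha:n\ge 0\}=\cl\{f_n^\alpha:n\ge 0\}$ kills every term of negative index and leaves
\[
 a_0(\alpha)\,e_0^\alpha=p_0(\alpha)f_0^\alpha+r_1(\alpha)f_1^\alpha.
\]
Since $p_0$, $r_1$, $a_0$ are all real (the first by the argument in the proof of Theorem~\ref{thm:multbyzsmp}, the second by the remark $\mathfrak a\,r_1(\alpha)=1$, the third as a Jacobi coefficient), taking norms yields $a_0(\alpha)=\sqrt{p_0(\alpha)^2+r_1(\alpha)^2}$; dividing and reading the result back through $W_A^{-1}$ is precisely the formula~\eqref{tildee0} for $\tilde e_0(\alpha)$.

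The only non-routine ingredient is spotting the identity $e_{-1}^\alpha=f_{-1}^\alpha$. After that, one application of $M_\frz$ combined with the half-space projection $P_+$ simultaneously encodes the normalization of $a_0(\alpha)$ and the precise linear combination of $f_0^\alpha,f_1^\alpha$ that represents $e_0^\alpha$, so I anticipate no serious obstacle beyond careful bookkeeping of characters and normalizations.
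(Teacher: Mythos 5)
Your proof is correct and follows essentially the same route as the paper: identify $e_{-1}^\alpha=f_{-1}^\alpha$, compare the projection of $\frz e_{-1}^\alpha$ onto $H^2(\alpha)$ in the two bases to get $a_0(\alpha)e_0^\alpha=p_0(\alpha)f_0^\alpha+r_1(\alpha)f_1^\alpha$ with $a_0(\alpha)=\sqrt{p_0^2(\alpha)+r_1^2(\alpha)}$, and read off $\tilde e_0(\alpha)$ through the change of basis. Your explicit check of $f_{-1}^\alpha=e_{-1}^\alpha$ via $\Delta=b\,b_{\frz(\zeta_*)}$ is a detail the paper merely asserts; otherwise the argument is the same.
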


\begin{proof}
We use the functional models given in Theorems \ref{thm:multbyz} and \ref{thm:multbyzsmp}. The first triple is equivalent to
$\{L^2(\alpha),\frz, \{e_{-1}^\alpha, e_{0}^\alpha\}\}$. Note that $e_{-1}^\alpha$ and $f_{-1}^\alpha$ simply coincide. Further, by Theorem  \ref{thm:multbyz}, the projection of $\frz e_{-1}^\alpha$ onto $H^2(\alpha)$ is of the form
$a_0(\alpha) e_0^\alpha$. By Theorem \ref{thm:multbyzsmp} the same vector has the decomposition
\begin{equation*}
p_0(\alpha) f_0^\alpha+r_1(\alpha)f_1^\alpha.
\end{equation*}
Since $a_0(\alpha)>0$ we have
\begin{equation}\label{tildee}
e_0^\alpha=\frac{p_0(\alpha) f_0^\alpha+r_1(\alpha)f_1^\alpha}{a_0(\alpha)}, \quad a_0(\alpha)={\sqrt{p_0^2(\alpha)+r_1^2(\alpha)}}.
\end{equation}
Thus, the given triple can be rewritten into the form $\{L^2(\alpha),\frz, \{f_{-1}^\alpha, e_{0}^\alpha\}\}$. By \eqref{tildee}
we get $ \{\ell^2, A(\alpha), \{e_{-1}, \tilde e_0(\alpha)\}\}$ with $\tilde e_0(\alpha)$ as given in \eqref{tildee0}.
\end{proof}

\begin{remark}
Note that we have simultaneously proved that $\{e_{-1}, \tilde e_0(\alpha)\}$ forms a cyclic subspace for $A(\alpha)$. Besides,
\begin{equation}\label{tildee1}
q_1(\alpha)=\langle A(\alpha) e_{-1}, e_{-1}\rangle=\langle J(\alpha) e_{-1}, e_{-1}\rangle=b_{-1}(\alpha)=
\mathcal B(\alpha\mu).
\end{equation}
\end{remark}

Recall, see \eqref{sjm}, that the shift of a Jacobi matrix  $J(\alpha)$ corresponds to the multiplication of the character by $\mu^{-1}$.

\begin{definition}
We define the Jacobi flow  on $A(E)$ as a dynamical system generated by the following map: 
$$
\mathcal J A(\alpha)=A(\mu^{-1}\alpha), \quad \alpha\in\Gamma^*.
$$
\end{definition}

\begin{thmjacobiflowper}
 \label{thm:jacobiflowper}
Let $\mathcal U(\alpha)$ be the periodic $2\times 2$-block diagonal unitary matrix given by
\begin{equation}
\label{eqn:ufunctional}
 \mathcal U(\alpha)\begin{bmatrix}e_{2m}& e_{2m+1}
 \end{bmatrix}=\begin{bmatrix}e_{2m}& e_{2m+1}
 \end{bmatrix}u(\alpha), 
 \end{equation}
 where
 \begin{equation}
\label{eqn:ufunctional1}
 u(\alpha)=\frac{1}{\sqrt{p_0^2(\alpha)+r_1^2(\alpha)}}
 \begin{bmatrix}p_0(\alpha)&r_1(\alpha)\\r_1(\alpha)&-p_0(\alpha) \end{bmatrix}
 \end{equation}
 Then
\begin{equation}\label{jfp}
 \mathcal J A(\alpha):=A(\mu^{-1}\alpha)=S^{-1}\mathcal U(\alpha)^*A(\alpha)\mathcal U(\alpha)S.
\end{equation}
\end{thmjacobiflowper}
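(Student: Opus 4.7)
The strategy is to pass to the functional model $L^{2}(\alpha)$ and identify the unitary $\mathcal{U}(\alpha)S$ with multiplication by the Blaschke product $b$. Under the identification $\ell^{2}\ni e_{n}\leftrightarrow f_{n}^{\alpha}\in L^{2}(\alpha)$ provided by Theorem~\ref{thm:multbyzsmp}, in which $A(\alpha)$ becomes multiplication by $\frz$, relation~\eqref{jfp} is equivalent to the assertion that multiplication by $\frz$ in the basis $g_{n}^{\alpha}:=\mathcal{U}(\alpha)f_{n+1}^{\alpha}$ of $L^{2}(\alpha)$ has matrix $A(\mu^{-1}\alpha)$.

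First I would compute this basis explicitly. Since $\mathcal{U}(\alpha)$ acts block-diagonally on the pairs $(f_{2m}^{\alpha},f_{2m+1}^{\alpha})$ via $u(\alpha)$ and $\Delta f_{n}^{\alpha}=f_{n+2}^{\alpha}$, the identity~\eqref{tildee} (recognizing $e_{0}^{\alpha}=k^{\alpha}/\sqrt{k^{\alpha}(0)}$ as in Theorem~\ref{thm:onb}) yields
\[
\mathcal{U}(\alpha)f_{2m}^{\alpha}=\Delta^{m}e_{0}^{\alpha},\qquad \mathcal{U}(\alpha)f_{2m+1}^{\alpha}=\Delta^{m}\eta^{\alpha},
\]
where $\eta^{\alpha}:=(r_{1}f_{0}^{\alpha}-p_{0}f_{1}^{\alpha})/\sqrt{p_{0}^{2}+r_{1}^{2}}$ is the unit vector in $H^{2}(\alpha)\ominus\Delta H^{2}(\alpha)$ orthogonal to $e_{0}^{\alpha}$. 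Hence $g_{2m-1}^{\alpha}=\Delta^{m}e_{0}^{\alpha}$ and $g_{2m}^{\alpha}=\Delta^{m}\eta^{\alpha}$.

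The main step is then to prove the identification $g_{n}^{\alpha}=b\,f_{n}^{\mu^{-1}\alpha}$ for every $n\in\Z$. Using $f_{2m+i}^{\beta}=\Delta^{m}f_{i}^{\beta}$, it suffices to verify the two cases $n=0,1$. For $n=1$: from~\eqref{smpbase} (with $(\mu^{-1}\alpha)\mu=\alpha$) one reads off $f_{1}^{\mu^{-1}\alpha}=b_{0}\,k^{\alpha}/\sqrt{k^{\alpha}(0)}$, so $bf_{1}^{\mu^{-1}\alpha}=bb_{0}\,e_{0}^{\alpha}=\Delta e_{0}^{\alpha}=g_{1}^{\alpha}$. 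For $n=0$: the function $bf_{0}^{\mu^{-1}\alpha}$ belongs to $H^{2}(\alpha)$ (character $\mu\cdot\mu^{-1}\alpha=\alpha$), has unit norm, is orthogonal to $e_{0}^{\alpha}$ because $b(0)=0$, and is orthogonal to $\Delta H^{2}(\alpha)=bb_{0}H^{2}(\alpha)$ because $b_{0}(\zeta_{*})=0$ (both orthogonalities via the reproducing property). Hence $bf_{0}^{\mu^{-1}\alpha}$ spans the one-dimensional complement of $\{e_{0}^{\alpha}\}$ inside $H^{2}(\alpha)\ominus\Delta H^{2}(\alpha)$, which is $\{\eta^{\alpha}\}$; the remaining sign coincides with the $\pm1$-ambiguity of $\sqrt{\alpha^{-1}(\gamma_{1})}$ flagged in Theorem~\ref{thm:multbyzsmp}.

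The conclusion is then routine: multiplication by $b$ is a unitary from $L^{2}(\mu^{-1}\alpha)$ onto $L^{2}(\alpha)$ (since $|b|=1$ on $\partial\bbD$) commuting with multiplication by $\frz$, so
\[
\langle \frz\, g_{j}^{\alpha}, g_{i}^{\alpha}\rangle_{L^{2}(\alpha)} = \langle \frz\, f_{j}^{\mu^{-1}\alpha}, f_{i}^{\mu^{-1}\alpha}\rangle_{L^{2}(\mu^{-1}\alpha)},
\]
which by Theorem~\ref{thm:multbyzsmp} is the $(i,j)$-entry of $A(\mu^{-1}\alpha)$. This is equivalent to~\eqref{jfp}. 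The main obstacle is the phase-tracking in the identification $bf_{0}^{\mu^{-1}\alpha}=\eta^{\alpha}$; beyond that, the argument is bookkeeping with the decompositions~\eqref{ort1} and~\eqref{smpbase0}.
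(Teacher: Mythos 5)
Your overall route coincides with the paper's: both proofs compare, inside the two-dimensional space $H^2(\alpha)\ominus\Delta H^2(\alpha)$, the pair $\{f_0^\alpha,f_1^\alpha\}$ with the pair obtained from the character $\mu^{-1}\alpha$ after multiplication by $b$, use \eqref{tildee} to identify the first vector, propagate by powers of $\Delta$, and conclude via the fact that multiplication by $b$ is a unitary map $L^2(\mu^{-1}\alpha)\to L^2(\alpha)$ commuting with multiplication by $\frz$. Your verification that $bf_1^{\mu^{-1}\alpha}=\Delta e_0^\alpha$ and that $bf_0^{\mu^{-1}\alpha}$ lies in $H^2(\alpha)\ominus\Delta H^2(\alpha)$ and is orthogonal to $e_0^\alpha$ (character count, $b(0)=0$, $b_{\frz(\zeta_*)}(\zeta_*)=0$) is correct and is a reasonable substitute for the paper's remark that the second column of a unitary $2\times2$ matrix is determined by the first.

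The genuine gap is exactly the step you call the main obstacle and then dispose of in one clause: a unit vector of the two-dimensional space $H^2(\alpha)\ominus\Delta H^2(\alpha)$ orthogonal to $e_0^\alpha$ is determined only up to a factor $c\in\{z\in\C: |z|=1\}$, not up to $\pm1$. Only a real sign can be absorbed into the choice of $\sqrt{(\mu^{-1}\alpha)^{-1}(\gamma_1)}$, i.e.\ into the identification $(p_0,p_1)\mapsto(-p_0,-p_1)$; if $c$ were non-real, the matrix of $\frz$ in your basis $g_n^\alpha$ would have its $p$-type entries multiplied by $c$ and \eqref{jfp} would fail, so asserting that the ambiguity "coincides with the $\pm1$-ambiguity" presupposes precisely what must be proved. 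The paper settles this by an explicit reproducing-kernel evaluation: the inner product of the normalized $\sqrt{\mu(\gamma_1)}\,bk^{\alpha\mu^{-1}}_{\zeta_*}$ against $k^\alpha_{\zeta_*}$ equals $c\,r_1(\alpha)/\sqrt{p_0^2(\alpha)+r_1^2(\alpha)}$ on one side and $\sqrt{\mu(\gamma_1)}\,b(\zeta_*)\sqrt{k^{\alpha\mu^{-1}}_{\zeta_*}(\zeta_*)/k^{\alpha}_{\zeta_*}(\zeta_*)}$ on the other; the symmetry $\overline{b(\bar\zeta)}=b(\zeta)$ together with the normalization $\gamma_1(\zeta_*)=\bar\zeta_*$ (so that $\overline{b(\zeta_*)}=\mu(\gamma_1)b(\zeta_*)$) shows this quantity is real, and positive for a consistent branch of $\sqrt{\mu(\gamma_1)}$, whence $c=1$. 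You must add this computation or an equivalent reality argument (for instance: the entries of $S^{-1}\mathcal U(\alpha)^*A(\alpha)\mathcal U(\alpha)S$ are real, and since $p_0$ and $p_1$ cannot vanish simultaneously on the isospectral curve \eqref{isosp}, some $p$-entry carrying the factor $c$ is nonzero, forcing $c=\pm1$); note that this is exactly where the special choice of $\zeta_*$ and the reality normalization of $p_0(\alpha)$ in Theorem~\ref{thm:multbyzsmp} are used, and without it the proof is incomplete.
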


\begin{proof}
In the two dimensional space $H^2(\alpha)\ominus\Delta H^2(\alpha)$ we consider two orthonormal systems
\begin{equation}
\label{twoorth}
\left\{\frac{\sqrt{\alpha^{-1}(\gamma_1)}k^\alpha_{\zeta_*}}{\sqrt{k^\alpha_{\zeta_*}(\zeta_*)}}, \frac{ b_{\frz(\zeta_*)}k^{\alpha\mu}}
{\sqrt{k^{\alpha\mu}(0)}}\right\}\ \text{and}\ 
\left\{\frac{k^\alpha}{\sqrt{k^\alpha(0)}}, 
\sqrt{\frac{\mu(\gamma_1)}{\alpha(\gamma_1)}}\frac{ bk^{\alpha\mu^{-1}}_{\zeta_*}}{\sqrt{k^{\alpha\mu^{-1}}_{\zeta_*}(\zeta_*)}}\right\}.
\end{equation}
They are related by a unitary matrix, which, for a moment, we denote by $v(\alpha)^*$. We will show that $v(\alpha)=u(\alpha)$ as it was defined in \eqref{eqn:ufunctional1}.

According to our notation, see 
\eqref{smpbase}, the first pair of vectors is $\{f_0^\alpha, f_1^\alpha\}$. The first vector of the second pair is $e^\alpha$, see \eqref{ort1}, and we have
$$
e^\alpha=\begin{bmatrix} f_0^\alpha & f_1^\alpha\end{bmatrix}v(\alpha)^*
\begin{bmatrix} 1 \\ 0\end{bmatrix}.
$$
Thus, due to \eqref{tildee},
$$
v(\alpha)^*
\begin{bmatrix} 1 \\ 0\end{bmatrix}=\frac{1}{\sqrt{p_0^2(\alpha)+r_1^2(\alpha)}}
 \begin{bmatrix}p_0(\alpha)&r_1(\alpha)\\ r_1(\alpha)&-p_0(\alpha) \end{bmatrix}\begin{bmatrix} 1 \\ 0\end{bmatrix}.
 $$
 Since both matrices here are unitary we have
 $$
v(\alpha)^*
\begin{bmatrix} 0 \\ 1\end{bmatrix}=\frac{1}{\sqrt{p_0^2(\alpha)+r_1^2(\alpha)}}
 \begin{bmatrix}p_0&r_1\\ r_1&-p_0 \end{bmatrix}(\alpha)\begin{bmatrix} 0 \\ 1\end{bmatrix} c
 $$
for a certain $c\in \mathbb T$. However,
$$
\frac{c r_1(\alpha)}{\sqrt{p_0^2(\alpha)+r_1^2(\alpha)}}=
\left\langle \frac{ \sqrt{\mu(\gamma_1)}bk^{\alpha\mu^{-1}}_{\zeta_*}}{\sqrt{k^{\alpha\mu^{-1}}_{\zeta_*}(\zeta_*)}}, 
\frac{k^\alpha_{\zeta_*}}{\sqrt{k^\alpha_{\zeta_*}(\zeta_*)}}\right\rangle=
\sqrt{\mu(\gamma_1)b(\zeta_*)}\sqrt{\frac{k^{\alpha\mu^{-1}}_{\zeta_*}(\zeta_*)}{k^\alpha_{\zeta_*}(\zeta_*)}},
$$
and since $\overline{b(\bar \zeta)}=b(\zeta)$, the last expression is real and, with a suitable choice of the square root of $\mu(\gamma_1)$, it is positive. In this case, $c=1$.

Now we rewrite the second pair from \eqref{twoorth} as $\{bf^{\alpha\mu^{-1}}_{-1}, bf^{\alpha\mu^{-1}}_{0}\}$. This means we have proved
\begin{equation*}
b\begin{bmatrix} f_{-1}^{\alpha\mu^{-1}} & f_0^{\alpha\mu^{-1}}\end{bmatrix}=
\begin{bmatrix} f_0^\alpha & f_1^\alpha\end{bmatrix}u(\alpha)^*.
\end{equation*}
Multiplying the above expression by $\Delta^m$, $m\in\Z$, shows that
\begin{equation*}
b\begin{bmatrix} f_{2m-1}^{\alpha\mu^{-1}} & f_{2m}^{\alpha\mu^{-1}}\end{bmatrix}=
\begin{bmatrix} f_{2m}^\alpha & f_{2m+1}^\alpha\end{bmatrix}u(\alpha)^*.
\end{equation*}
Therefore, the matrices of the multiplication operator by $\frz$ in these two bases are related by \eqref{jfp}.
 \end{proof}

Recall that, in \cite{kssmp}, $A(E)$ was described as a collection of points $(p_0,p_1)$ on the isospectral curve
\begin{equation}\label{isosp}
p_0^2+p_1^2+\frak a^2 p_0^2p_1^2+\frak bp_0p_1=\frac{1}{\frak a}.
\end{equation}

\begin{corollary}
The Jacobi flow on the isospectral curve is given by the following transformation
\begin{equation}\label{jaconiso}
(p_0^{(1)},p_1^{(1)}):=\mathcal J(p_0,p_1)=\left(p_1+\frac{\frak b p_0}{1+\frak a ^2p_0^2}, -p_0\right).
\end{equation}
Consequently, the coefficients of the Jacobi matrix $J\in J(E)$ associated with a fixed point $(p_0,p_1)$ on the isospectral curve are given:
\begin{equation}\label{jaconiso1}
a^2_n=\frac{1}{\frak a^2}+(p_0^{(n)})^2, \  b_{n-1}=-\frac{\frak b}{\frak a}-\frak a p_0^{(n)} p_1^{(n)},
\quad (p_0^{(n)},p_1^{(n)})=\mathcal J^{\circ n}(p_0,p_1).
\end{equation}

\end{corollary}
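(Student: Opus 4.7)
The plan is to translate the operator identity \eqref{jfp} into an explicit recursion on $(p_0,p_1)$ via a $2{\times}2$-blockwise computation, and then close the calculation using the magic formula $V(A)=S^2+S^{-2}$ from \cite{kssmp}. Group the standard basis as pairs $(e_{2m},e_{2m+1})$. By Theorem~\ref{thm:multbyzsmp} the diagonal block of $A(\alpha)$ is $\begin{bmatrix} q_0 & p_1\\ p_1 & q_1\end{bmatrix}$ and the subdiagonal block is $\begin{bmatrix} 0 & 0\\ p_0 & r_1\end{bmatrix}$, both constant in $m$. Since $\mathcal U(\alpha)$ is block diagonal with the \emph{same} block $u=u(\alpha)$ everywhere, $\mathcal U^*A\mathcal U$ is obtained by conjugating each block by $u$. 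Writing $a=p_0/\sqrt{p_0^2+r_1^2}$ and $b=r_1/\sqrt{p_0^2+r_1^2}$, the identities $ap_0+br_1=\sqrt{p_0^2+r_1^2}$ and $bp_0-ar_1=0$ make the off-diagonal conjugation collapse to
\begin{equation*}
u\begin{bmatrix} 0 & 0\\ p_0 & r_1\end{bmatrix}u = \begin{bmatrix} r_1 & 0\\ -p_0 & 0\end{bmatrix}.
\end{equation*}

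Thus $\mathcal U^*A\mathcal U$ is pentadiagonal but with its ``three-term'' and ``five-term'' rows interchanged relative to an SMP matrix, and the subsequent conjugation by $S$ realigns the parity. Matching the entries of $S^{-1}\mathcal U^*A\mathcal U S$ against the SMP template of Theorem~\ref{thm:multbyzsmp} one reads off $r_1^{(1)}=r_1$, $p_1^{(1)}=-p_0$, and $p_0^{(1)} = ab(q_0-q_1)+(b^2-a^2)p_1$. The main obstacle is to express $q_0$ and $q_1$ intrinsically in terms of $(p_0,p_1)$. For this I use the magic formula multiplied by $A$, namely $\mathfrak aA^2+\mathfrak bA - I = A(S^2+S^{-2})$. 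Its $(2m,2m+2)$ entry gives $(A^2)_{2m,2m+2}=p_0p_1$ (via the single product $A_{2m,2m+1}A_{2m+1,2m+2}$) and $(AS^{-2})_{2m,2m+2}=A_{2m,2m}=q_0$, hence $q_0 = \mathfrak ap_0p_1$. Its $(2m-1,2m+1)$ entry gives $(A^2)_{2m-1,2m+1}=2r_1q_1+p_0p_1$, and using $\mathfrak ar_1=1$ one concludes $q_1 = -\mathfrak b/\mathfrak a-\mathfrak ap_0p_1$. Substituting $q_0-q_1 = \mathfrak b/\mathfrak a+2\mathfrak ap_0p_1$ and $r_1=1/\mathfrak a$ into the formula for $p_0^{(1)}$ and simplifying yields $p_0^{(1)} = p_1+\mathfrak bp_0/(1+\mathfrak a^2p_0^2)$, proving \eqref{jaconiso}.

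For \eqref{jaconiso1}, I use the shift relation \eqref{sjm}, which implies $a_n(\alpha) = a_0(\mu^{-n}\alpha)$ and $b_{n-1}(\alpha) = b_{-1}(\mu^{-n}\alpha)$. By \eqref{tildee}, $a_0^2 = p_0^2+r_1^2 = p_0^2+1/\mathfrak a^2$; by \eqref{tildee1} combined with the explicit formula for $q_1$ derived above, $b_{-1} = q_1 = -\mathfrak b/\mathfrak a - \mathfrak ap_0p_1$. Since $A(\mu^{-n}\alpha) = \mathcal J^{\circ n}A(\alpha)$ has parameters $(p_0^{(n)},p_1^{(n)})$ by the definition of the Jacobi flow, evaluating these two identities at the character $\mu^{-n}\alpha$ yields exactly \eqref{jaconiso1}.
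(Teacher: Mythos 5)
Your proposal is correct and follows essentially the same route as the paper: it uses the block-conjugation identities coming from \eqref{jfp} to read off $p_1^{(1)}=-p_0$, $r_1^{(1)}=r_1$ and $p_0^{(1)}$, and then obtains \eqref{jaconiso1} from \eqref{sjm} together with \eqref{tildee} and \eqref{tildee1}. The only difference is that you explicitly derive $q_0=\frak a p_0p_1$ and $q_1=-\frak b/\frak a-\frak a p_0p_1$ from the magic formula (the paper tacitly imports these relations from the description of $A(E)$ in \cite{kssmp}), and your algebra checks out.
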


\begin{proof}
Let $A^{(1)}=\mathcal J A$, $A=A(\alpha)\in A(E)$. Due to \eqref{jfp} we have
\begin{equation*}
\begin{bmatrix} r_{1}^{(1)} & 0 \\ p_{1}^{(1)} & 0\end{bmatrix}=
 u(\alpha)^*\begin{bmatrix} 0 & 0 \\ p_{0} & r_{1}\end{bmatrix}u(\alpha)
\end{equation*}
and
\begin{equation*}
 \begin{bmatrix} q_{1}^{(1)} & p_{0}^{(1)} \\ p_{0}^{(1)} & q_{0}^{(1)}\end{bmatrix}
 =
 u(\alpha)^*\begin{bmatrix} q_{0} & p_{1} \\ p_{1} & q_{1}\end{bmatrix}u(\alpha).
\end{equation*}
These identities imply \eqref{jaconiso}. For $J=J(\alpha)$ we obtain \eqref{jaconiso1} from \eqref{tildee} and \eqref{tildee1}.
\end{proof}

\begin{figure}[htbp]
\begin{center}
\includegraphics[scale=0.5]
{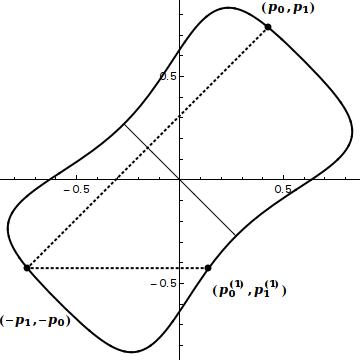}
\caption{Jacobi flow mapping in $(p_0,p_1)$-plane}
\label{jffig}
\end{center}
\end{figure}

\begin{remark}
The transformation \eqref{jaconiso} has the following geometrical meaning: first we pass to the symmetric point 
$(p_0,p_1)\mapsto (-p_1,-p_0)$ and then we are looking for the second point of intersection of the horizontal line, going trough this point, with the isospectral curve, see Fig \ref{jffig}. In Fig. \ref{perfig} we give an example where the parameters $(\frak a,\frak b)$ are chosen in such a way that the trajectory given by the Jacobi flow is periodic (consequently, the Jacobi matrix $J$ is periodic). 
The second choice of parameters, see Fig. \ref{nperfig}, corresponds to almost periodic $J$.
\end{remark}

\begin{figure}[htbp]
\begin{center}
\includegraphics[scale=0.5]
{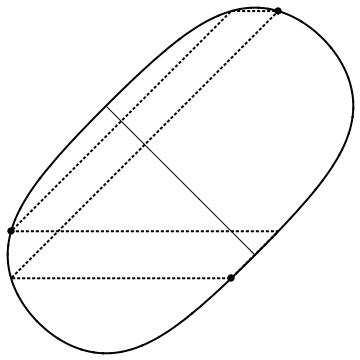}
\caption{$J\in J(E)$ are periodic}
\label{perfig}
\end{center}
\end{figure}

\begin{figure}[htbp]
\begin{center}
\includegraphics[scale=0.5]
{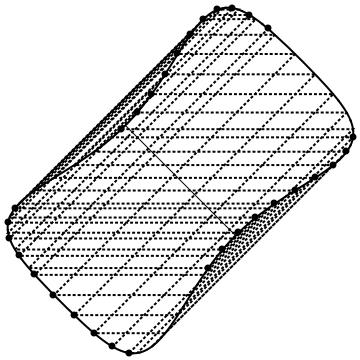}
\caption{$J\in J(E)$ are almost periodic (the trajectory is not closed)}
\label{nperfig}
\end{center}
\end{figure}
\section{Jacobi flow on SMP matrices: Generic Case}

\subsection{Definition of the Jacobi flow}
First of all, following the result from Proposition \ref{propuep} we will establish a connection between Jacobi- and SMP matrices. 
 To this end, we  give a specific cyclic subspace for SMP matrices.
 
 \begin{proposition}
 \label{cor:cyclic}
 Let $A$ be an SMP matrix with defining parameters 
 $$
 \{p_k,q_{2k+1},r_{2k+1}\}_{k\in\Z}
 $$ 
(cf. \cite[Theorem 3.2.2]{kssmp}). To this matrix we associate the vector
\begin{equation}\label{tildee2}
 \te_0=\frac 1{a_0}(p_0e_0+r_1e_1), \quad a_0:=\sqrt{p_0^2+r_1^2}.
\end{equation}
Then, the vectors $e_{-1}$ and $\te_0$  form a cyclic subspace for $A$.
\end{proposition}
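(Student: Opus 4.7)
The plan is to prove cyclicity by a direct inductive construction that extends the idea behind Proposition~\ref{propuep} to the abstract (non-periodic) SMP setting. Let $V$ denote the cyclic subspace of $A$ generated by $\{e_{-1},\te_0\}$; I would show that every basis vector $e_n$, $n\in\Z$, belongs to $V$, handling the two sides of the index separately.

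The starting observation exploits the banded structure of $A$ (tridiagonal even rows, pentadiagonal odd rows). Applying $A$ to $e_{-1}$ gives
\[
Ae_{-1}=\bigl(r_{-1}e_{-3}+p_{-1}e_{-2}+q_{-1}e_{-1}\bigr)+a_0\te_0,
\]
so the seed $w^{(-1)}:=r_{-1}e_{-3}+p_{-1}e_{-2}$ lies in $V$. Applying $A$ once more and subtracting the already-known $e_{-1}$-component places a second vector into $\cl\{e_{-3},e_{-2}\}$; its coordinates $(r_{-1}q_{-3}+p_{-1}p_{-2},\,r_{-1}p_{-2}+p_{-1}q_{-2})$ are independent of $(r_{-1},p_{-1})$ exactly when
\[
(r_{-1}^2-p_{-1}^2)p_{-2}+r_{-1}p_{-1}(q_{-2}-q_{-3})\neq 0,
\]
the first of a sequence of generic non-degeneracy conditions. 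Under it, $e_{-2},e_{-3}\in V$ and the leftover is the next seed $w^{(-3)}=r_{-3}e_{-5}+p_{-3}e_{-4}\in V$; induction captures every $e_n$ with $n\leq -1$.

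On the positive side the analogous computation yields
\[
A\te_0=a_0 e_{-1}+(\text{multiple of }\te_0)+\eta\,\te_0'+\tfrac{r_1}{a_0}\bigl(p_2e_2+r_3e_3\bigr),\qquad \te_0':=\tfrac{1}{a_0}(-r_1e_0+p_0e_1),
\]
with $\eta=\tfrac{p_1(p_0^2-r_1^2)+p_0r_1(q_1-q_0)}{a_0^2}$. Reducing modulo the known $e_{-1}$ and $\te_0$, the generic non-vanishing of $\eta$ lets me extract a nontrivial $\te_0'$-component (contaminated by contributions from $\cl\{e_2,e_3\}$). Further applications of $A$, together with the analogous non-vanishing determinants at each successive level, then peel off $e_2,e_3,e_4,e_5,\ldots$ one pair at a time, giving every $e_n$ with $n\geq 0$.

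The main obstacle throughout is exactly this chain of non-degeneracy conditions: at every inductive step one must verify linear independence inside a two-dimensional subspace. In the periodic setting, Proposition~\ref{propuep} rendered these automatic via the functional model of Theorem~\ref{thm:onbsmp}; in the present abstract setting they are precisely the content of the \emph{Generic Case} qualifier in the section heading. Once they are assumed in force, the remainder of the argument is routine bookkeeping.
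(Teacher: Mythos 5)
There is a genuine gap: you prove a conditional statement, not the proposition. Proposition~\ref{cor:cyclic} asserts cyclicity of $\{e_{-1},\te_0\}$ for \emph{every} SMP matrix, and it is needed unconditionally later (Theorem~\ref{thm:unitriples} and the definition of the Jacobi flow rest on it). Your induction, however, runs only under an infinite chain of non-degeneracy hypotheses such as $(r_{-1}^2-p_{-1}^2)p_{-2}+r_{-1}p_{-1}(q_{-2}-q_{-3})\neq 0$ and $\eta\neq0$, and these involve precisely the parameters $p_k,q_k$ which \emph{can} vanish for an SMP matrix (the paper's own proof treats the degenerate configuration $p_1=q_0=0$ explicitly). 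Reading the section title ``Generic Case'' as licensing these assumptions is a misinterpretation: the paper's argument makes no such assumptions, and the proposition would be false as a tool for the sequel if it only held generically. There is also a secondary soft spot: even when your determinant conditions hold, the vector you obtain at each step is only known to lie in $V$ \emph{up to} the ``contamination'' supported on the next pair $\cl\{e_{-5},e_{-4}\}$ (respectively $\cl\{e_2,e_3\}$), so one cannot simply subtract it and conclude $e_{-2},e_{-3}\in V$; turning this into a proof requires an additional argument (an infinite triangular elimination inside a closure), which is not routine bookkeeping.

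The missing idea is the one the paper exploits: the inverse $A^{-1}$ of an SMP matrix is again banded, with $A^{-1}e_{-1}=\pi_{-1}e_{-2}+\sigma_{-1}e_{-1}+\pi_0e_0$ and pentadiagonal rows whose corner entries $\rho_{2n}$ never vanish, just as the corner entries $r_{2n+1}$ of $A$ never vanish (this is structural, from \cite[Theorem 3.2.2]{kssmp}, unlike your conditions on $p_k,q_k$). Alternating between rows of $A$ and rows of $A^{-1}$ then captures every $e_n$ with \emph{no} genericity: the only case distinction required is $\pi_{-1}\neq0$ versus $\pi_{-1}=0$ (equivalently $p_1\neq0$ or $p_1=0$), which is Lemma~\ref{thm:cyclic}; the passage from $\{e_{-1},e_0\}$ (or $\{e_{-2},e_{-1}\}$) to $\{e_{-1},\te_0\}$ is then a short computation using $A^{-1}$ and the structural identity $q_0=p_0p_1/r_1$. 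If you want to salvage your approach, you must bring $A^{-1}$ (or equivalently the $\pi,\rho,\sigma$ data) into the induction and eliminate the genericity assumptions; as written, the argument does not establish the stated result.
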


We begin with the following lemma.

\begin{lemma}
\label{thm:cyclic}
 Let $A$ be an SMP matrix. Then one of the following holds true.
\begin{enumerate}
 \item $\{e_{-1},e_{0}\}$ forms a cyclic subspace for $A$ (if $\pi_{-1}\neq0$).
 \item $\{e_{-2},e_{-1}\}$ forms a cylclic subspace for $A$ (if $\pi_{-1}=0$).
\end{enumerate}
\end{lemma}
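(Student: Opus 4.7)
The plan is to propagate the cyclic subspace outward from the initial pair by combining the explicit SMP pentadiagonal action with the magic formula $V(A)=S^2+S^{-2}$ of~\cite{kssmp}. First I would spell out the action of $A$ on the coordinate basis,
\begin{align*}
Ae_{2m}&=p_{2m}e_{2m-1}+q_{2m}e_{2m}+p_{2m+1}e_{2m+1},\\
Ae_{2m-1}&=r_{2m-1}e_{2m-3}+p_{2m-1}e_{2m-2}+q_{2m-1}e_{2m-1}+p_{2m}e_{2m}+r_{2m+1}e_{2m+1},
\end{align*}
and note that the magic formula immediately yields $V(A)e_n=e_{n+2}+e_{n-2}$, so each such $V(A)$-iterate automatically couples a forward-by-two with a backward-by-two shift inside any $A$-invariant subspace.

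For Case 1, starting from $\{e_{-1},e_0\}$: applying $A$ to $e_0$ yields $e_1$ through the three-term identity (using $p_1\neq 0$, a generic non-vanishing inherent in the SMP structure). Then $V(A)e_{-1}=e_1+e_{-3}$ places $e_{-3}$ in the subspace, and subtracting all the now-known contributions from $Ae_{-1}$ leaves exactly $p_{-1}e_{-2}$; it is here that the hypothesis $\pi_{-1}\neq 0$ is invoked to extract $e_{-2}$. The identity $V(A)e_0=e_2+e_{-2}$ then delivers $e_2$, after which an induction alternating $A$ (for three-term propagation at even rows), $V(A)$ (for the $\pm 2$ pairing), and the five-term relations at odd rows --- the latter made invertible by the non-vanishing of $r_{2k+1}$ guaranteed by the SMP structure (recall $\frak a r_{2k+1}=1$ in the period-two situation) --- covers every remaining $e_n$.

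For Case 2, the vanishing $\pi_{-1}=0$ removes the $e_{-2}$ contribution from $Ae_{-1}$, so the $A$-invariant span of $\{e_{-1},e_0\}$ cannot reach $e_{-2}$ and cyclicity fails for that pair. Starting instead from $\{e_{-2},e_{-1}\}$: the three-term identity $Ae_{-2}=p_{-2}e_{-3}+q_{-2}e_{-2}+p_{-1}e_{-1}$ supplies $e_{-3}$ (via $p_{-2}\neq 0$ generically); then $V(A)e_{-1}=e_1+e_{-3}$ delivers $e_1$; subtracting the known contributions from $Ae_{-1}$ isolates $p_0e_0$, hence $e_0$. From this point the propagation proceeds exactly as in Case~1.

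The main obstacle is the careful bookkeeping required to extract single basis vectors from the two-dimensional combinations produced by the odd-row identities, which is precisely where the non-vanishing of the various SMP parameters --- both the ones guaranteed by the structure and the one singled out by $\pi_{-1}$ --- must be invoked. The observation that trims the work is the use of the magic formula to couple forward and backward two-step shifts, which is also what explains why the entire dichotomy turns on the single parameter $\pi_{-1}$.
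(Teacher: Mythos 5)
Your argument has a genuine gap, and it is located exactly at its two load-bearing points. First, the ``magic formula'' $V(A)=S^2+S^{-2}$ is not available here: it characterizes the period-two class $A(E)$ of Section~1.2, whereas this lemma is stated for an \emph{arbitrary} SMP matrix in the generic setting of Section~2 (indeed, the whole point of the functional $H_+$ later on is to measure how far $V(A)$ is from that shift structure, and $V$ itself presupposes a choice of $E$ that is not part of the data of the lemma). So the identity $V(A)e_n=e_{n+2}+e_{n-2}$, which drives your entire propagation scheme, is simply false in the generality required. The paper gets the needed two-step moves at even indices not from $V(A)$ but from $A^{-1}$ directly, via the five-term relations $A^{-1}e_{2n}=\rho_{2n}e_{2n-2}+\pi_{2n}e_{2n-1}+\sigma_{2n}e_{2n}+\pi_{2n+1}e_{2n+1}+\rho_{2n+2}e_{2n+2}$ together with the structural non-vanishing $\rho_{2n}\neq 0$, $r_{2n+1}\neq 0$; you never use $A^{-1}$ at all, which is why the hypothesis $\pi_{-1}\neq 0$ (an entry of $A^{-1}$) has no natural place in your argument.

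Second, your use of that hypothesis is misplaced: in Case~1 you propose to extract $e_{-2}$ from $Ae_{-1}$, but the coefficient of $e_{-2}$ there is $p_{-1}$, not $\pi_{-1}$, and $p_{-1}\neq 0$ is not guaranteed by (nor equivalent to) $\pi_{-1}\neq 0$. The correct extraction is from $A^{-1}e_{-1}=\pi_{-1}e_{-2}+\sigma_{-1}e_{-1}+\pi_0 e_0$, where $\pi_{-1}\neq 0$ is used verbatim. Relatedly, your claim that $p_1\neq 0$ is ``inherent in the SMP structure'' is wrong: as the paper notes in the proof of the cyclicity proposition, $\pi_{-1}=0$ forces $p_1=q_0=\sigma_{-1}=0$, so $p_1\neq 0$ is precisely the content of the Case~1 hypothesis, and in Case~2 your assertion that $\pi_{-1}=0$ ``removes the $e_{-2}$ contribution from $Ae_{-1}$'' again confuses $\pi_{-1}$ with $p_{-1}$ (and is in any event not needed, since the lemma only asserts a dichotomy, not failure of cyclicity for the other pair). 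A repaired proof should follow the paper's route: use that $\pi_{-1}$ and $\pi_0$ cannot vanish simultaneously, extract the first missing neighbor from $A^{-1}e_{-1}$ (resp.\ from $Ae_{-2}$ and $A^{-1}$-relations in Case~2), and then alternate the three/five-term relations of $A$ and of $A^{-1}$, whose key coefficients $r_{2n+1}$ and $\rho_{2n}$ never vanish, to reach all $e_n$.
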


\begin{proof}
Recall that the entries $\pi_{-1}$ and $\pi_0$ of the inverse of an SMP matrix cannot equal zero 
simultaneously, as otherwise, by \cite[Theorem~3.2.2]{kssmp},
 we would have that $\sigma_{-1}=0$. Thus, it would contain a whole line of zeros, which violates
the definition of SMP matrices.

First, assume $\pi_{-1}\neq0$ and consider the equation
\begin{equation*}
 A^{-1}e_{-1}=\pi_{-1}e_{-2} + \sigma_{-1}e_{-1}+\pi_0e_0.
\end{equation*}

Thus, $e_{-2}\in\cl\{A^ne_{-1},A^ne_{0}\}$. Furthermore,
\begin{equation}
\label{eqn:aezero}
 Ae_{0}=p_{0}e_{-1} + q_0e_{0}+p_1e_1
\end{equation}
and hence, note that $p_{1}\neq0$,  $e_{1}\in\cl\{A^ne_{-1},A^n e_{0}\}$.
Given the relations
\begin{IEEEeqnarray}{rCl}
 \label{eqn:ainveeven}
\nonumber A^{-1}e_{2n} &=& \rho_{2n}e_{2n-2}+\pi_{2n}e_{2n-1} \\
&+& \sigma_{2n}e_{2n}+\pi_{2n+1}e_{2n+1}+\rho_{2n+2}e_{2n+2}\\
\label{eqn:aeodd}
\nonumber Ae_{2n+1} &=& r_{2n+1}e_{2n-1}+ p_{2n+1}e_{2n}\\
&+&q_{2n+1}e_{2n+1}+p_{2n+2}e_{2n+2}+r_{2n+3}e_{2n+3}
\end{IEEEeqnarray}
and keeping in mind that $\rho_{2n}\neq0$ as well as $r_{2n+1}\neq0$, we obtain all unit vectors with positive indices by alternately
iterating through (\ref{eqn:ainveeven}) and (\ref{eqn:aeodd}), starting with $n=0$ and those with negative indices by iterating through 
 (\ref{eqn:aeodd}) and (\ref{eqn:ainveeven}), starting with $n=-1$.

Now, if $\pi_{-1}=0$, i.e. $\pi_0\neq0$, we obtain similarly to the first case that $e_0\in\cl\{A^ne_{-2},A^ne_{-1}\}$. Additionally,
$p_{-2}\neq0$ and
\begin{equation*}
 Ae_{-2}=p_{-2}e_{-3}+q_{-2}e_{-2}+p_{-1}e_{-1}.
\end{equation*} 
Therefore, $e_{-3}\in\cl\{A^ne_{-2},A^ne_{-1}\}$. With these vectors in hand we find that $\{e_1,e_2,\ldots\}\subset\cl\{A^ne_{-2},A^ne_{-1}\}$
considering the expressions $Ae_{-1},A^{-1}e_{0},\ldots$, as well as $\{e_{-4},e_{-5},\ldots\}\subset\cl\{A^ne_{-2},A^ne_{-1}\}$ through
$A^{-1}e_{-2}, Ae_{-3},\ldots$
\end{proof}

\begin{proof}[Proof of Proposition \ref{cor:cyclic}]
 Let $\pi_{-1}\neq0$, i.e. $p_1\neq0$. Then, from (\ref{eqn:aezero}) we get that
\begin{equation*}
 e_1=\frac{1}{p_1}\left(Ae_0-p_0e_{-1}-q_0e_0\right).
\end{equation*}
Together with the definition of $\te_0$ it follows that
\begin{IEEEeqnarray*}{rCl}
 a_0\te_0 &=& r_1e_1+p_0e_0\\
&=& \frac{r_1}{p_1}\left(A+(-q_0+\frac{p_0 p_1}{r_1})I\right)e_0-\frac{p_0r_1}{p_1}e_{-1},
\end{IEEEeqnarray*}
and, since $q_{0}=p_0p_1/r_1$, we finally obtain
\begin{equation*}
 e_0=\frac{p_1a_0}{r_1}A^{-1}\te_0+p_0A^{-1}e_{-1},
\end{equation*}
 implying $\{e_{-1},e_{0}\}\subseteq\cl\{A^ne_{-1},A^{n}\tilde e_0\}$.

Now, assuming $\pi_{-1}=0$, we can deduce the following (note that $A^{-1}$ has to be invertible by definition)
\begin{equation*}
 p_1=q_0=\sigma_{-1}=0 \quad\text{and}\quad p_0\neq0\neq\pi_0.
\end{equation*}
Therefore, first of all, we obtain
\begin{equation*}
 e_0=\frac{1}{\pi_0}A^{-1}e_{-1}\quad\text{as well as}\quad e_1=\frac{1}{r_1}(a_0\te_0-p_0e_0)
\end{equation*}
Moreover, we use $\pi_2=p_0\rho_2/r_1\neq0$ to find that
\begin{equation*}
 e_{2}=\frac{1}{\pi_2}\left(A^{-1}e_1-\pi_1e_0-\sigma_1e_1\right).
\end{equation*}
Thus, by looking at the expansion of $A^{-1}e_0$, we obtain
\begin{equation*}
 e_{-2}=\frac{1}{\rho_0}\left(A^{-1}e_0-\pi_0e_{-1}-\sigma_0e_0-\pi_1e_1-\rho_2e_2\right).
\end{equation*}
As we are in a position to express every vector appearing on the right hand-side in terms of $A^ne_{-1}$ and $A^n\te_{0}$
for suitable $n\in\mathbb Z$, we get that $\{e_{-2},e_{-1}\}\subseteq\cl\{A^ne_{-1},A^n\te_{0}\}$. Thus, in any case, the result
follows from Lemma~\ref{thm:cyclic}.
\end{proof}

This allows us to define a mapping $\mathcal F$ from the class of SMP matrices to the class of Jacobi matrices, i.e., to set the above mentioned connection between SMP- and Jacobi matrices.

\begin{thmunitriples}
\label{thm:unitriples}
For any SMP matrix $A$ there exists a unique Jacobi matrix $J$, $J=\mathcal F A$, such that $(\ell^2, J,\{e_{-1},e_0\})$ and 
$(\ell^2,A,\{e_{-1},\tilde{e}_0\})$ are equivalent triples, see Definition \ref{deftry}.
\end{thmunitriples}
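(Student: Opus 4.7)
My plan is to construct $J = \mathcal F A$ via a bilateral Lanczos (three-term Gram--Schmidt) orthogonalization driven by $A$ and seeded by the cyclic pair $(e_{-1}, \tilde e_0)$ supplied by Proposition~\ref{cor:cyclic}. Set $\hat e_{-1} := e_{-1}$ and $\hat e_0 := \tilde e_0$; these are orthonormal since $\tilde e_0 \in \cl\{e_0, e_1\}$ is orthogonal to $e_{-1}$. For $n \geq 1$ define inductively
\[
a_n \hat e_n := A \hat e_{n-1} - \langle A\hat e_{n-1}, \hat e_{n-1}\rangle \hat e_{n-1} - \langle A\hat e_{n-1}, \hat e_{n-2}\rangle \hat e_{n-2},
\]
with $a_n > 0$ chosen to normalize $\hat e_n$, and symmetrically for $n \leq -2$ using $A\hat e_{n+1}$ in place of $A\hat e_{n-1}$. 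Self-adjointness of $A$ and induction force $\langle A\hat e_n, \hat e_m\rangle = 0$ whenever $|n-m|\ge 2$, so $A$ is tridiagonal in the basis $\{\hat e_n\}$ with positive off-diagonals $a_n$.

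The crux is to show that the iteration never terminates (every $a_n$ is strictly positive) and that $\{\hat e_n\}_{n\in\Z}$ is complete in $\ell^2$. Completeness is a direct application of the cyclicity conclusion of Proposition~\ref{cor:cyclic}: if $V := \cl\{\hat e_n\}$, then $V$ is closed and $A$-invariant, so by self-adjointness $V^\perp$ is $A$-invariant too, and for any $v \in V^\perp$ and $k \ge 0$
\[
\langle A^k e_{-1}, v\rangle = \langle e_{-1}, A^k v\rangle = 0,\qquad \langle A^k \tilde e_0, v\rangle = \langle \tilde e_0, A^k v\rangle = 0,
\]
so $v$ is orthogonal to the cyclic subspace generated by $\{e_{-1}, \tilde e_0\}$, which is $\ell^2$; hence $v = 0$ and $V = \ell^2$. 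The same device excludes forward termination at any $N$: were $a_{N+1} = 0$, the closed span $\cl\{\hat e_n : n \leq N\}$ would already fill $\ell^2$, but a basis indexed on $\{\ldots, N-1, N\}$ cannot carry a bilateral Jacobi matrix on $\ell^2(\Z)$ in the prescribed form; combining this with a direct check of the first Lanczos step against the explicit pentadiagonal form of $A$, and the nonvanishing of the SMP parameters $p_k,\, r_{2k+1}$ (which are precisely the hypotheses used in Lemma~\ref{thm:cyclic} and Proposition~\ref{cor:cyclic}), rules out degeneracy in both directions.

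Define the unitary $\UU \colon \ell^2 \to \ell^2$ by $\UU e_n := \hat e_n$ for $n \in \Z$. Then $\UU e_{-1} = e_{-1}$, $\UU e_0 = \tilde e_0$, and $J := \UU^* A \UU$ is by construction a bilateral Jacobi matrix with positive off-diagonals $a_n$, realizing the equivalence of the two triples; set $\mathcal F A := J$. Uniqueness is automatic: any other Jacobi matrix $J'$ effecting such an equivalence via a unitary $\UU'$ produces vectors $\UU' e_n$ that satisfy the three-term recurrence of $J'$ with positive off-diagonals from the same initial pair $\UU' e_{-1} = e_{-1}$, $\UU' e_0 = \tilde e_0$; this recurrence is literally the Lanczos algorithm above, so $\UU' e_n = \hat e_n$ by induction, forcing $\UU' = \UU$ and $J' = J$. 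The step I expect to be delicate is verifying non-termination of the Lanczos recursion on both sides; this is precisely where the pair cyclicity of Proposition~\ref{cor:cyclic}, together with the concrete SMP band structure, has to be invoked.
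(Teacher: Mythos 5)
Your plan (a two-sided Lanczos recursion seeded by the cyclic pair $(e_{-1},\tilde e_0)$) can be made to work, but as written it has a genuine gap at its central step. You assert that ``self-adjointness of $A$ and induction force $\langle A\hat e_n,\hat e_m\rangle=0$ whenever $|n-m|\ge 2$,'' i.e.\ that the two half-chains are mutually orthogonal and $A$ is tridiagonal in the resulting system. That implication is false for a general self-adjoint operator and a general orthonormal seed pair: take $A$ the free Jacobi matrix, $Ae_n=e_{n-1}+e_{n+1}$, with seeds $\hat e_{-1}=e_{-1}$, $\hat e_0=e_1$; your recursions give $\hat e_1=(e_0+e_2)/\sqrt2$ and $\hat e_{-2}=(e_{-2}+e_0)/\sqrt2$, which are not even orthogonal, so no unitary $\mathbf U$ and no Jacobi matrix arise. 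What saves the construction for an SMP matrix with the seed $(e_{-1},\tilde e_0)$ is exactly the structural identity \eqref{Jrep}, $A=A_+\oplus A_-+a_0\bigl(\langle\,\cdot\,,\tilde e_0\rangle e_{-1}+\langle\,\cdot\,,e_{-1}\rangle\tilde e_0\bigr)$: $A$ couples $\ell^2_+$ and $\ell^2_-$ only through the pair $(\tilde e_0,e_{-1})$, so the forward chain is the one-sided Lanczos chain of $A_+$ seeded at $\tilde e_0$ and stays inside $\ell^2_+$, the backward chain is that of $A_-$ seeded at $e_{-1}$ and stays inside $\ell^2_-$, whence cross-orthogonality and tridiagonality. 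You never state or use this identity, and without it the key step collapses. It is precisely the starting point of the paper's proof, which then proceeds by a different route (the $2\times 2$ matrix resolvent $R^A$, the one-sided resolvent functions $r^A_\pm$ coupled by $a_0$, and the spectral theorem in $L^2_{d\sigma}$), bypassing the recursion altogether.

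Two further points. Your exclusion of termination is not an argument: if the forward chain stopped at some $N$, you would simply obtain a half-infinite Jacobi representation of $A$, which contradicts nothing you have established; what actually rules it out is the band structure of $A$ together with $r_{2k+1}\neq 0$ (the coefficient of $e_{2k+1}$ in $A_+^{\,k}\tilde e_0$ is $(r_1/a_0)\,r_3\cdots r_{2k+1}\neq 0$, and similarly backwards with $r_{-1},r_{-3},\dots$), or, in the paper's language, the fact that the measures representing $r^A_\pm$ have infinite support. Note also that nonvanishing of the $p_k$ is not a standing hypothesis ($p_1=0$ is allowed; cf.\ the case $\pi_{-1}=0$ in Lemma~\ref{thm:cyclic}), so your appeal to it should be dropped. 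Finally, in the completeness step you should allow negative powers $A^{-k}$ as well, since the cyclicity in Proposition~\ref{cor:cyclic} is proved using $A^{-1}$; this is harmless because the closed span of the $\hat e_n$ reduces $A$ and $A$ is invertible. The uniqueness half of your argument is fine once existence (tridiagonality) has been secured.
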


\begin{proof}
With $A_\pm$ we denote $A_\pm=P_\pm A P_\pm$, where $P_\pm$ is the orthogonal projection onto $\ell^2_\pm$. 
In \cite{kssmp} we have already mentioned that $A$ can be written as the orthogonal sum of these operators and a two dimensional 
pertubation, i.e.,
\begin{align}
A=A_+\oplus A_- + a_0(\langle .,\tilde{e}_0\rangle e_{-1} + \langle .,e_{-1}\rangle \tilde{e}_0)\label{Jrep},
\end{align} 
Due to Proposition~\ref{cor:cyclic}, the subspace $\{e_{-1},\tilde{e}_{0}\}$ is cyclic for $A$. The matrix resolvent function for $A$ with respect to the given cyclic subspace is defined by
\[
R^A(z)=
\left[
\begin{matrix}
R^A_{-1,-1}(z)&R^A_{-1,0}(z)\\
R^A_{0,-1}(z)&R^A_{0,0}(z) 
\end{matrix}
\right]
=\mathcal{E}^*(A-z)^{-1}\mathcal{E},
\]
where
\[
\mathcal{E}
\left[
\begin{matrix}
c_{-1}\\
c_0
\end{matrix}
\right]
=
c_{-1}e_{-1}+c_0\tilde{e}_0, \quad (c_{-1}, c_2)\in \C^2.
\]
From \eqref{Jrep} we obtain the following identity
\[
R^A(z)=\left[
\begin{matrix}
(r_-^A)^{-1}&a_0\\
a_0&(r^A_+)^{-1}
\end{matrix}
\right]
^{-1},
\]
where
$$
r_-^A(z)=\langle (A_--z)^{-1}e_{-1}, e_{-1}\rangle, \quad
r_+^A(z)=\langle (A_+-z)^{-1}\tilde e_{0},\tilde  e_{0}\rangle.
$$
It is well-known that a Jacobi matrix $J$ has a representation, similar to \eqref{Jrep}. 

We define one-sided Jacobi matrices $J_\pm$ by the resolvent functions $r^A_\pm(z)$, see \eqref{eqrezf}. Jointly with the constant $a_0$ they form a two-sided Jacobi matrix $J$.
 Hence, given a SMP matrix
 $A$, we can construct a Jacobi matrix $J$ such that
\[
R^J(z)=
\left[
\begin{matrix}
R^A_{-1,-1}(z)&R^A_{-1,0}(z)\\
R^A_{0,-1}(z)&R^A_{0,0}(z) 
\end{matrix}
\right].
\]
In fact, since $\{e_{-1},e_{0}\}$ is 
cyclic for $J$, replacing $J$ with $A$ and  $\tilde{e}_{0}$ with $e_{0}$, the above statements stay true.
 The resolvent functions 
$R(z):=R^A(z)=R^J(z)$ have an integral representation
\[
R(z)=\int\frac{d\sigma}{x-z},
\]
 where $\sigma$ is a $2\times 2$ matrix measure. Thus, by the spectral theorem, $A$ and $J$ are unitarily equivalent to the multiplication by an independent variable on
\[
L^2_{d\sigma}=\left\{ f=
\left[
\begin{matrix}
f_{-1}(x)\\
f_0(x)
\end{matrix}
\right]
:\int f^*d\sigma f<\infty
\right\}.
\] 
Moreover, in this spectral representation of $A$ we  have that
\[ 
e_{-1}\mapsto \left[
\begin{matrix}
1\\
0
\end{matrix}
\right]
,\quad
\tilde{e}_{0}\mapsto\left[
\begin{matrix}
0\\
1
\end{matrix}
\right]
,
\]
as well as for $J$ that
\[ 
e_{-1}\mapsto \left[
\begin{matrix}
1\\
0
\end{matrix}
\right]
,\quad
{e}_{0}\mapsto\left[
\begin{matrix}
0\\
1
\end{matrix}
\right]
.
\]
Thus, the triples  $(\ell^2, J, \{e_{-1},e_0\})$ and 
$(\ell^2,A, \{e_{-1},\tilde{e}_0\})$ are equivalent. Since such an equivalence is defined uniquely, the map $J=\mathcal F A$ is  well-defined.
\end{proof}

\begin{remark}
Since $e_{-1}$ is a common vector in the chosen cyclic subspaces for $A$ and $J=\mathcal F A$, similarly to \eqref{tildee1}, we have
\begin{equation}\label{tildee3}
b_{-1}=\langle J e_{-1}, e_{-1}\rangle=\langle A e_{-1}, e_{-1}\rangle=q_{-1}.
\end{equation}

\end{remark}

\begin{definition}
Let $\mathcal S$ be the shift on Jacobi matrices, $\mathcal S J=S^{-1}J S$, and let $\mathcal F$ be the map from SMP- to Jacobi matrices, which was defined in Theorem \ref{thm:unitriples}. We define the Jacobi flow on SMP matrices as a dynamical system
$$
A^{(n+1)}=\mathcal J A^{(n)}, \quad A^{(0)}=A,
$$
generated by the map $\mathcal J$, which makes the following diagram commutative.
\begin{equation}\label{defjfg}
\begin{array}{ccc}
 \text{SMP} & \xrightarrow{\mathcal J}  & \text{SMP}  \\
   &   &    \\
_\mathcal F  \big\downarrow &   &  _\mathcal F \big\downarrow  \\
   &   &    \\
\text{Jacobi}  &  \xrightarrow{\mathcal S}  &  \text{Jacobi} 
\end{array}
\end{equation}

\end{definition}

\subsection{Basic properties of the Jacobi flow}

We will now proceed with further investigations on the transformation $\mathcal J$. Theorem~\ref{thm:jacobiflowper} 
gives a good hint for finding how $\mathcal J$ acts on SMP matrices in the generic case in terms of explicit formulas.

\begin{theorem}
\label{def:unitary}
 Let $A$ be an SMP matrix. We define the unitary block diagonal matrix $\mathcal U(A)$ by
\begin{equation*}
 \mathcal U(A)\begin{bmatrix} e_{2m}&e_{2m+1} \end{bmatrix}=
 \begin{bmatrix} e_{2m}&e_{2m+1} \end{bmatrix} U_m(A)
\end{equation*}
where the $U_m(A)$, $m\in\mathbb Z$, are unitary $2\times2$ matrices given by
\begin{equation*}
 U_m(A)=\frac{1}{\sqrt{p_{2m}^2+r_{2m+1}^2}}\begin{bmatrix}
                                          p_{2m} & r_{2m+1} \\ r_{2m+1} & -p_{2m}
                                         \end{bmatrix}.
\end{equation*}
 The Jacobi flow transformation $\mathcal J$ maps the SMP class onto itself by
\begin{equation*}
 \mathcal J(A)=S^{-1} \mathcal U(A)^{*}A~\mathcal U(A)~S
\end{equation*}
for all SMP matrices $A$.
\end{theorem}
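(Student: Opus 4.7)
Let $B := S^{-1}\mathcal U(A)^* A\, \mathcal U(A)\, S$ and $J := \mathcal F A$. By the defining diagram \eqref{defjfg} together with the uniqueness in Theorem~\ref{thm:unitriples}, the assertion $\mathcal J A = B$ reduces to two claims: (a) $B$ is an SMP matrix; and (b) $\mathcal F B = \mathcal S J$.

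The plan for (a) is a direct entry-by-entry computation. Since $\mathcal U(A)$ is $2$-block diagonal and $A$ is $5$-diagonal with the SMP zero pattern (the $(2m,2m\pm 2)$ entries vanish), the conjugation $\mathcal U(A)^*A\,\mathcal U(A)$ remains $5$-diagonal with the same zero pattern, and conjugation by $S$ merely reindexes the parity of rows. Introducing $c_m := p_{2m}/a_{2m}$, $s_m := r_{2m+1}/a_{2m}$, one reads off explicit formulas for the entries $p_k(B), q_k(B), r_{2k+1}(B)$ in terms of the data of $A$ and then verifies the SMP algebraic relations from \cite[Theorem~3.2.2]{kssmp} (crucially the constraint $q_{2k}(B) = p_{2k}(B)p_{2k+1}(B)/r_{2k+1}(B)$) from the analogous relations for $A$, combined with $c_m^2+s_m^2=1$.

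For (b), the natural intertwiner is $W := \mathcal U(A)\, S$, which satisfies $WBW^*=A$ and
\begin{equation*}
We_{-1} = \mathcal U(A) e_0 = (p_0 e_0 + r_1 e_1)/a_0 = \tilde e_0(A).
\end{equation*}
By Theorem~\ref{thm:unitriples} there is a unitary $U:\ell^2\to\ell^2$ with $UAU^{-1}=J$, $Ue_{-1}=e_{-1}$, $U\tilde e_0(A)=e_0$; the Jacobi recursion $Je_0 = a_0(J) e_{-1} + b_0(J) e_0 + a_1(J) e_1$ then yields
\begin{equation*}
U^{-1} e_1 = \frac{A\tilde e_0(A) - b_0(J)\tilde e_0(A) - a_0(J) e_{-1}}{a_1(J)},
\end{equation*}
with $a_0(J) = \sqrt{p_0^2+r_1^2}$ in view of \eqref{tildee}. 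Setting $V := S^{-1}UW$, one has $VBV^{-1}=\mathcal SJ$ and $Ve_{-1}=e_{-1}$ for free; the remaining requirement $V\tilde e_0(B)=e_0$ is equivalent to the single identity $W\tilde e_0(B)=U^{-1}e_1$ in $\ell^2$. Both sides lie in $\cl\{e_{-1},e_0,e_1,e_2,e_3\}$, so this reduces to a finite check once $\tilde e_0(B)$ is available from (a). Its verification produces the triple equivalence $(\ell^2,B,\{e_{-1},\tilde e_0(B)\})\sim(\ell^2,\mathcal SJ,\{e_{-1},e_0\})$, and the uniqueness clause of Theorem~\ref{thm:unitriples} yields $\mathcal FB = \mathcal SJ$.

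The main obstacle is the bookkeeping in step (a): propagating all SMP constraints on $A$ through the mixing by $(c_m,s_m)$ and the shift to obtain the constraints on $B$, in particular the quadratic identity $q_{2k}(B)r_{2k+1}(B)=p_{2k}(B)p_{2k+1}(B)$. Step (b) is then essentially a consistency check matching $(p_0(B),r_1(B))$ from (a) against the Jacobi data $(a_0(J),b_0(J),a_1(J))$ of $\mathcal SJ$; the fact that $We_{-1}=\tilde e_0(A)$ is the structural reason why $W$ realizes the shift on the Jacobi side.
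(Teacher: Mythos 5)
Your argument is correct in outline and reaches the theorem by a genuinely different route than the paper for the key identification $\mathcal F(B)=\mathcal S\,\mathcal F(A)$. The paper iterates the map $j(A)=S^{-1}\mathcal U(A)^*A\,\mathcal U(A)S$ to construct the whole orthonormal system $\tilde e_{k}=\mathcal U(A^{(0)})S\cdots\mathcal U(A^{(k)})Se_{-1}$ (Theorem~\ref{thrm:ekt}), shows that $A$ satisfies a three-term recurrence in this system with the explicit coefficients \eqref{jsmpcoef}, thereby identifying $\mathcal F A$, and then reads off the shift from the basis relation $\tilde e_{k,A}=\mathcal U(A)S\tilde e_{k-1,j(A)}$. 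You instead exhibit a single explicit intertwiner $V=S^{-1}UW$ with $W=\mathcal U(A)S$ between $B$ and $\mathcal S J$ and reduce the matching of the distinguished vectors to the one finite identity $W\tilde e_0(B)=U^{-1}e_1$; the structural fact you isolate, $We_{-1}=\tilde e_0(A)$, is exactly the relation the paper exploits, and your finite identity amounts in substance to the $k=0,1$ steps of the paper's recursion computation. Your route is shorter for a single step of the flow and avoids building the full basis, but it does not deliver the explicit Jacobi coefficients \eqref{jsmpcoef} of $\mathcal F A$ in terms of the iterated flow, which the paper uses later (Main Lemma). Two caveats, neither fatal: the real content of your step (a) is precisely that the chosen $U_m$ annihilates the second column of the off-diagonal block, i.e.\ the identity \eqref{j1} --- before conjugation by $S$ the zero pattern is not ``the same'' but the SMP pattern shifted by one --- and this computation, together with the constraint $q_{2k}(B)r_{2k+1}(B)=p_{2k}(B)p_{2k+1}(B)$ and the finite check in (b) (for which you still must express $b_0(J)=\langle A\tilde e_0,\tilde e_0\rangle$ and $a_1(J)=\|A\tilde e_0-b_0(J)\tilde e_0-a_0e_{-1}\|$ in the entries of $A$), is deferred rather than carried out; also $a_0(J)=\sqrt{p_0^2+r_1^2}$ should be obtained from $\langle A\tilde e_0,e_{-1}\rangle=a_0$ in the general case rather than quoted from the periodic-model formula \eqref{tildee}. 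Your appeals to the uniqueness clause of Theorem~\ref{thm:unitriples} and to the defining diagram \eqref{defjfg} are the same as the paper's and are valid.
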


 In this subsection we will use the notation
\begin{equation}\label{j}
 j(A)=S^{-1} \mathcal U(A)^{*}A~\mathcal U(A)~S
\end{equation}
Our goal is to show that $\mathcal{J}(A)=j(A)$.
As usual,  $A^{(k)}=j^{\circ k}(A)$ and the same upper index is related to the coefficients of $A^{(k)}$.

 Note that, according to the definition of $\mathcal U(A)$,
\begin{equation}\label{j1}
 U^{*}_{k-1}\begin{bmatrix} 0 & 0 \\ p_{2k} & r_{2k+1}\end{bmatrix}U_k=\begin{bmatrix} r_{2k-1}^{(1)} & 0 \\ p_{2k-1}^{(1)} & 0\end{bmatrix}
 \end{equation}
 and 
 \begin{equation*}
 U^{*}_{k}\begin{bmatrix} q_{2n} & p_{2k+1} \\ p_{2k+1} & q_{2k+1}\end{bmatrix}U_k=\begin{bmatrix} q_{2k-1}^{(1)} & p_{2k}^{(1)} \\ p_{2k}^{(1)} & q_{2k}^{(1)}\end{bmatrix}.
\end{equation*}
That is, multiplying $A$ by $\mathcal U(A)$ and its conjugate, we obtain an SMP matrix shifted by one in the positive direction. 
\begin{lemma}
The unitary matrices $U_m(A)$ are defined   uniquely by the property that $j(A)$ is an SMP matrix, if, in addition, we assume that the off-diagonal entries of $U_m(A)$ are positive.
\end{lemma}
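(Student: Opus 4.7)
The plan is to derive, from the hypothesis that $j(A)$ is an SMP matrix, a single scalar equation in the entries of each $U_k$, and then to check that this equation together with unitarity and the positivity assumption determines $U_k$ uniquely. I anticipate the main obstacle to be the translation of the SMP banded pattern of $j(A)$ into vanishing conditions on $B = \mathcal{U}(A)^* A\, \mathcal{U}(A)$; the algebra afterwards is routine and decouples across $k$.

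Writing $U_k = \begin{bmatrix}\alpha_k & \beta_k \\ \gamma_k & \delta_k\end{bmatrix}$, I would first observe that unitarity of $U_k$ combined with the positivity of $\beta_k,\gamma_k$ forces $\gamma_k = \beta_k$ and $\alpha_k = -\overline{\delta_k}$; this follows from orthogonality of the columns, $\overline{\alpha_k}\beta_k + \overline{\gamma_k}\delta_k = 0$, together with the equality of column norms. Thus each $U_k$ depends on at most two real parameters, say $\beta_k\in (0,1]$ and the argument of $\delta_k$.

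Next I would compute the $(k-1, k)$ off-diagonal super-block of $B$. Since the corresponding super-block of $A$ is the rank-one matrix $O^+_{k-1} = \begin{bmatrix}0 & 0 \\ p_{2k} & r_{2k+1}\end{bmatrix}$, its conjugate factors as
\[
U_{k-1}^* O^+_{k-1} U_k = \begin{bmatrix}\overline{\gamma_{k-1}}\\ \overline{\delta_{k-1}}\end{bmatrix} \begin{bmatrix}P_k & Q_k\end{bmatrix}, \qquad Q_k := p_{2k}\beta_k + r_{2k+1}\delta_k.
\]
Applying $j(A)_{ij} = B_{i+1, j+1}$ and the SMP band pattern (odd-indexed columns pentadiagonal, even-indexed columns tridiagonal), the required vanishing of $j(A)_{2k-3, 2k}$ and $j(A)_{2k-2, 2k}$ is exactly the vanishing of the second column of the block above; cf.~\eqref{j1}. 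Since the second row of the unitary $U_{k-1}$ has unit norm, it cannot be identically zero, and hence $Q_k = 0$ for every $k$. This is a single scalar identity in the entries of $U_k$ alone.

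To finish I would solve $p_{2k}\beta_k + r_{2k+1}\delta_k = 0$. The reality of $p_{2k}, r_{2k+1}, \beta_k$ forces $\delta_k\in\mathbb{R}$, hence $\alpha_k = -\delta_k$ is real; combining with $\beta_k^2 + \delta_k^2 = 1$ yields $\beta_k^2 = r_{2k+1}^2/(p_{2k}^2 + r_{2k+1}^2)$. The positivity $\beta_k > 0$ together with the standard SMP sign convention $r_{2k+1} > 0$ then pins down the remaining signs, reproducing precisely the matrix $U_k$ from the statement of the theorem.
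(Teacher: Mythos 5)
Your proposal is correct and takes essentially the same route as the paper: from the band structure of an SMP matrix you extract exactly the condition \eqref{j1} that the second column of $\begin{bmatrix} 0 & 0 \\ p_{2k} & r_{2k+1}\end{bmatrix}U_k$ vanishes, i.e.\ that the second column of $U_k$ is orthogonal to $(p_{2k},\, r_{2k+1})$, and then unitarity plus positivity of the off-diagonal entries pin $U_k$ down uniquely. Your explicit steps (the rank-one factorization, the parametrization of a unitary with positive off-diagonal entries, solving $p_{2k}\beta_k + r_{2k+1}\delta_k=0$ using $r_{2k+1}\neq 0$) merely spell out what the paper states in two sentences; the remark that recovering the displayed formula for $U_m(A)$, rather than uniqueness itself, uses the sign convention $r_{2m+1}>0$ is a fair but cosmetic point.
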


\begin{proof}
 Indeed, as
\begin{equation*}
 \begin{bmatrix} 0 & 0 \\ p_{2k} & r_{2k+1}\end{bmatrix}U_k\begin{bmatrix}0 \\  1\end{bmatrix}=0,
 \end{equation*}
the second column of $U_k$ should be orthogonal to $[p_{2k}\, r_{2k+1}]$. Since this matrix is unitary, the first column should be orthogonal to the second one.  Thus, both columns are defined uniquely up to unimodular scalar multipliers, which are defined by the normalization condition.
\end{proof}

\begin{lemma} 
The transformation \eqref{j} is invertible. 
\end{lemma}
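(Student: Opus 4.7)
My plan is to invert $j$ by explicitly recovering all parameters of $A$ from those of $A^{(1)} := j(A)$. Writing out \eqref{j1} blockwise, the right-hand side of $\mathcal U(A)$ first kills the second column of $\begin{bmatrix} 0 & 0 \\ p_{2k} & r_{2k+1}\end{bmatrix}$ and converts its first column into $a_0^{(k)} e_2$, where $a_0^{(k)} := \sqrt{p_{2k}^2 + r_{2k+1}^2}$ is the normalization of $U_k(A)$. Multiplying by $U_{k-1}^*$ then gives
\[
r_{2k-1}^{(1)} = \frac{a_0^{(k)}}{a_0^{(k-1)}}\, r_{2k-1}, \qquad p_{2k-1}^{(1)} = -\frac{a_0^{(k)}}{a_0^{(k-1)}}\, p_{2k-2}.
\]
Squaring and summing these two relations cancels the ratio and produces the key identity
\[
a_0^{(k)} = \sqrt{(r_{2k-1}^{(1)})^2 + (p_{2k-1}^{(1)})^2},
\]
so every $a_0^{(k)}$ can be read off directly from the parameters of $A^{(1)}$.

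With all $a_0^{(k)}$ known, the displayed relations invert to explicit formulas for $r_{2k-1}$ and $p_{2k-2}$ in terms of $A^{(1)}$; this determines every $p_{2k}$ and $r_{2k+1}$, hence every block $U_k(A)$ and the full unitary $\mathcal U(A)$. The remaining SMP parameters $q_{2k}, p_{2k+1}, q_{2k+1}$ of $A$ are then read off by inverting the $(2,2)$-block conjugation identity stated immediately after \eqref{j1}: with $U_k$ already recovered, its left-hand side is uniquely reconstructed as $U_k$ times the (now known) right-hand side times $U_k^*$. Equivalently, the candidate inverse is
\[
A = \mathcal U(A)\, S\, A^{(1)} S^{-1}\, \mathcal U(A)^*,
\]
with $\mathcal U(A)$ expressed as above in the data of $A^{(1)}$.

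The main technical obstacle is verifying that the $A$ produced by this recipe is genuinely an SMP matrix — i.e.\ that the heptadiagonal shape and the structural SMP constraints (such as $q_{2k} = p_{2k}p_{2k+1}/r_{2k+1}$, used in the proof of Proposition \ref{cor:cyclic}) are automatically satisfied. Because $\mathcal U(A)$ is $2{\times}2$-block-diagonal aligned with the even/odd index splitting that defines the SMP class, and $S^{-1}\cdot S$ simply shifts this splitting, this reduces to a direct algebraic check that uses that $A^{(1)}$ is SMP. Once this preservation is in place, injectivity of $j$ is automatic from the uniqueness of the recovery, and surjectivity from the explicit construction, so $j$ is invertible on the SMP class.
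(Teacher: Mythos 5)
Your argument is correct, but it follows the route the paper explicitly mentions and then declines to write out (``this can be obtained easily from \eqref{j1}''), rather than the proof the paper actually gives. Your computation checks out: from \eqref{j1} one gets $r_{2k-1}^{(1)}=\frac{a_0^{(k)}}{a_0^{(k-1)}}r_{2k-1}$ and $p_{2k-1}^{(1)}=-\frac{a_0^{(k)}}{a_0^{(k-1)}}p_{2k-2}$ with $a_0^{(k)}=\sqrt{p_{2k}^2+r_{2k+1}^2}$, and since $p_{2k-2}^2+r_{2k-1}^2=(a_0^{(k-1)})^2$ the sum of squares collapses to give $a_0^{(k)}=\sqrt{(r_{2k-1}^{(1)})^2+(p_{2k-1}^{(1)})^2}$ --- the same identity the paper later exploits in the proof of Theorem~\ref{thrm:ekt}. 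From this you recover every $U_k(A)$, hence $\mathcal U(A)$, and then $A=\mathcal U(A)\,S\,A^{(1)}S^{-1}\mathcal U(A)^*$, which indeed shows that $A^{(1)}=j(A)$ determines $A$; this is exactly the content of the lemma as the paper proves it. The paper's own proof is a ``macro-level'' argument: it applies the involution $\tau A=-S^{-1}A^{-1}S$ to $A^{(1)}=j(A)$, rewrites the result as $\tau A=S^{-1}\mathcal U(A)\bigl(S^2\tau A^{(1)}S^{-2}\bigr)\mathcal U(A)^*S$, and invokes the preceding uniqueness lemma for the blocks of $\mathcal U$ to conclude $\tau A=j(S^2\tau A^{(1)}S^{-2})$ and $\mathcal U(A)=\mathcal U(S^2\tau A^{(1)}S^{-2})^*$, i.e.\ a closed-form inverse built from $j$, $\tau$ and shifts, with no entrywise computation and staying entirely inside operations already known to preserve the SMP class. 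That last feature is what your version must pay for by hand: your closing claim of surjectivity (``invertible on the SMP class'') rests on the preservation check you only sketch --- that the candidate $A$ built from an \emph{arbitrary} SMP matrix $A^{(1)}$ is again SMP --- whereas for solvability of $A^{(1)}=j(A)$ for $A$, which is all the paper's proof establishes, your recovery argument already suffices and that check is not needed.
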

\begin{proof}
This can be obtained easily from \eqref{j1}, but we want show this also on a ``macro-level", that is, directly from \eqref{j}. Recall that the following involutionon acts on SMP matrices
$$
\tau A=-S^{-1}A^{-1}S.
$$
Let $A^{(1)}=j(A)$. Applying $\tau$, we get
$$
\tau A^{(1)}=S^{-2}\mathcal U(A)^*S\tau A S^{-1}\mathcal U(A) S^2.
$$
That is,
$$
\tau A=S^{-1}\mathcal U(A)S^2\tau A^{(1)}S^{-2}\mathcal U(A)^*S.
$$
Since $\tau A$ and $S^2\tau A^{(1)}S^{-2}$ are SMP matrices, we have, by the uniqueness property of the $j$-transform, that
$$
\tau A=j(S^2\tau A^{(1)}S^{-2})\quad\text{and}\quad \mathcal U(A)= \mathcal{U}(S^2\tau A^{(1)}S^{-2})^*.
$$
Thus, $A^{(1)}=j(A)$ can be solved for $A$.
\end{proof}

Using the $j$-transform we can explicitly find an orthonormal basis in $\ell^2$ with respect to which $A$ becomes a Jacobi matrix. 

\begin{thrmekt}
 \label{thrm:ekt}
Let $A$ be an SMP matrix. Let
\begin{equation*}
 \tilde e_{k}=\tilde e_{k,A}=\mathcal U(A^{(0)})S \mathcal U( A^{(1)})S\cdots\mathcal U( A^{(k)})Se_{-1},\quad\forall k\geq0,
\end{equation*}
$\tilde e_{-1}=e_{-1}$, and
\begin{equation*}
 \tilde e_{k-1}=\tilde e_{k-1,A}=S^{-1} \mathcal U(A^{(-1)})\cdots S^{-1}\mathcal U( A^{(k)})^{-1} e_{-1},\quad\forall k<0.
\end{equation*}
These vectors form  an orthonormal system in $\ell^2$, with respect to which the following three term recurrence relation holds
\begin{equation}\label{jsmpcoef0}
 A\tilde e_{k-1} = a_{k-1} \tilde e_{k-2} + b_{k-1} \tilde e_{k-1}+a_{k}\tilde e_{k}, 
\end{equation}
where
\begin{equation}\label{jsmpcoef}
a_k={\sqrt{(p^{(k)}_0)^2+(r^{(k)}_1)^2}},\quad b_{k-1}=q^{(k)}_{-1}.
\end{equation}
\end{thrmekt}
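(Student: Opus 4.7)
The plan is to first recast the $j$-transform as a unitary conjugation. Setting $W_j := \mathcal U(A^{(j)})S$, each factor is unitary (a product of the block-diagonal unitary $\mathcal U(A^{(j)})$ and the bilateral shift $S$), and by definition of $j$ we have $A^{(j+1)} = W_j^* A^{(j)} W_j$. Iterating yields $A = V_k A^{(k+1)} V_k^*$ with $V_k := W_0 W_1 \cdots W_k$, so in particular $A V_{k-1} = V_{k-1} A^{(k)}$. The positive-index vectors of the theorem are then exactly $\tilde e_k = V_k e_{-1}$ and automatically have unit norm; since $j$ is invertible, an analogous product of the $W_{-j}^* = S^{-1}\mathcal U(A^{(-j)})$ produces the negative-index vectors in the same way.

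Orthonormality of the whole family will follow from a parity argument. Every $\mathcal U(A^{(j)})$ is block-diagonal with blocks $\{e_{2m},e_{2m+1}\}$, so it preserves both $\ell^2_+$ and $\ell^2_-$; additionally, $S$ maps $\ell^2_+$ into itself. For $-1\leq k<l$ one has $V_k^*V_l = W_{k+1}\cdots W_l$, and
\begin{equation*}
\langle\tilde e_k,\tilde e_l\rangle = \langle e_{-1},\,W_{k+1}\cdots W_l\,e_{-1}\rangle;
\end{equation*}
the rightmost $S$ in $W_l$ sends $e_{-1}$ into $e_0\in\ell^2_+$, after which the remaining factors keep the vector in $\ell^2_+$, so the pairing with $e_{-1}\in\ell^2_-$ vanishes. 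The mirror argument using that each $W_{-j}^*$ preserves $\ell^2_-$ handles the negative-index case, and mixed pairings are automatic because $\tilde e_{-1}=e_{-1}$ separates the two halves.

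For the three-term recurrence I would use $AV_{k-1}=V_{k-1}A^{(k)}$ to write $A\tilde e_{k-1}=V_{k-1}A^{(k)}e_{-1}$, then expand $A^{(k)}e_{-1}$ from the pentadiagonal SMP structure at the odd index $-1$:
\begin{equation*}
A^{(k)}e_{-1} = r_{-1}^{(k)}e_{-3}+p_{-1}^{(k)}e_{-2}+q_{-1}^{(k)}e_{-1}+p_0^{(k)}e_0+r_1^{(k)}e_1.
\end{equation*}
The middle term directly contributes $q_{-1}^{(k)}\tilde e_{k-1} = b_{k-1}\tilde e_{k-1}$. Using $V_k = V_{k-1}\mathcal U(A^{(k)})S$ and the first column of $U_0(A^{(k)})$ gives $V_{k-1}(p_0^{(k)}e_0+r_1^{(k)}e_1) = a_k\tilde e_k$ with $a_k=\sqrt{(p_0^{(k)})^2+(r_1^{(k)})^2}$. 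Dually, $V_{k-2} = V_{k-1}W_{k-1}^{-1} = V_{k-1}S^{-1}\mathcal U(A^{(k-1)})$ together with the second column of $U_{-1}(A^{(k-1)})$ expresses $\tilde e_{k-2}$ as $N^{-1}V_{k-1}(r_{-1}^{(k-1)}e_{-3}-p_{-2}^{(k-1)}e_{-2})$, where $N = \sqrt{(p_{-2}^{(k-1)})^2+(r_{-1}^{(k-1)})^2}$.

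The main obstacle is the final matching: showing that $V_{k-1}(r_{-1}^{(k)}e_{-3}+p_{-1}^{(k)}e_{-2}) = a_{k-1}\tilde e_{k-2}$. This is exactly what identity \eqref{j1} encodes, specialized to index $0$ and applied to $A^{(k-1)}$ in place of $A$: a short $2\times 2$ matrix computation yields $r_{-1}^{(k)} = (a_{k-1}/N)\,r_{-1}^{(k-1)}$ and $p_{-1}^{(k)} = -(a_{k-1}/N)\,p_{-2}^{(k-1)}$, which is precisely the match required. Combining the three contributions produces \eqref{jsmpcoef0} with coefficients \eqref{jsmpcoef}, and no separate verification of off-three-diagonal matrix entries is necessary since they follow automatically from the explicit identity.
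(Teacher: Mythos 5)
Your argument is essentially the paper's: the recurrence comes from the intertwining $AV_{k-1}=V_{k-1}A^{(k)}$, the columns of $U_0(A^{(k)})$ and $U_{-1}(A^{(k-1)})$, and the coefficient identity \eqref{j1}; the paper verifies exactly this for $k=0$ and then propagates by the same unitary equivalence, whereas you run the computation uniformly in $k$ and also spell out the orthonormality, which the paper leaves implicit. In that orthonormality part, one step is stated too weakly: for two negative-index vectors the pairing reduces to $\bigl\langle W_{-(n+1)}^{*}\cdots W_{-m}^{*}e_{-1},\,e_{-1}\bigr\rangle$ with $m>n\ge 0$, and knowing only that each $W_{-j}^{*}$ \emph{preserves} $\ell^2_-$ does not force this to vanish, since $e_{-1}$ itself lies in $\ell^2_-$ — this is precisely where the literal mirror of the positive-side argument breaks (there the vector lands in $\ell^2_+$, which is orthogonal to $e_{-1}$; here it lands in the same half-space). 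The repair is one line and fully in your spirit: because $\mathcal U(A^{(-j)})$ preserves $\ell^2_-$ and the last operator applied in $W_{-j}^{*}=S^{-1}\mathcal U(A^{(-j)})$ is $S^{-1}$, each $W_{-j}^{*}$ in fact maps $\ell^2_-$ into the closed span of $\{e_n:\ n\le -2\}$, so the composite applied to $e_{-1}$ is orthogonal to $e_{-1}$; the same support statement is what you need for the pairing of a negative-index vector with $\tilde e_{-1}=e_{-1}$. With that observation added, your proof is complete and coincides in substance with the paper's, while being somewhat more explicit.
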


 \begin{proof}
We note that $\tilde e_{-1}=e_{-1}$. Therefore,
 $$
 A\tilde e_{-1}=(r_{-1}e_{-3}+p_{-1}e_{-2})+q_{-1}e_{-1}+(p_0 e_0+r_1e_1).
 $$
 By definition we have
 $$
 p_0 e_0+r_1e_1=a_0 \tilde e_0, \quad a_0=\sqrt{p_0^2+r_1^2},
 $$
 and by  \eqref{j1}
\begin{equation*}
\begin{bmatrix}
 r_{-1}^{(0)} \\ p_{-1}^{(0)}
\end{bmatrix}
=\frac{\sqrt{(p^{(-1)}_0)^2+(r^{(-1)}_1)^2}}{\sqrt{(p^{(-1)}_{-2})^2+(r^{(-1)}_{-1})^2}} 
\begin{bmatrix}
 r^{(-1)}_{-1} \\- p^{(-1)}_{-2}
\end{bmatrix}.
\end{equation*}
Therefore,
\begin{equation*}
a_{-1}={\sqrt{(p^{(-1)}_0)^2+(r^{(-1)}_1)^2}}={\sqrt{(p^{(0)}_{-1})^2+(r^{(0)}_{-1})^2}},
\end{equation*}
and, consequently,
$$
r_{-1}e_{-3}+p_{-1}e_{-2}=a_{-1}S^{-1}\frac{r^{(-1)}_{-1}e_{-2}-p^{(-1)}_{-2}e_{-1}}{\sqrt{(p^{(-1)}_{-2})^2+(r^{(-1)}_{-1})^2}}=a_{-1}S^{-1}\mathcal U(A^{(-1)})e_{-1}.
$$
Thus, we have \eqref{jsmpcoef0} and \eqref{jsmpcoef} for $k=0$.

Now, we write the same relation for $A^{(1)}$. Using the unitary equivalence \eqref{j}, we obtain \eqref{jsmpcoef0} for $k=1$, and so on...

\end{proof}

\begin{proof}[Proof of Theorem \ref{def:unitary}]
Due to Proposition \ref{cor:cyclic} $e_{-1}$ and $\tilde e_0$ form a cyclic subspace in the entire $\ell^2$ space. By \eqref{jsmpcoef0} they form a cyclic subspace in span$\{\tilde e_k\}_{k\in\Z}$. Therefore, these spaces coincide.

Let us define $\mathbf U:\ell^2\to\ell^2$ by
$\mathbf U \tilde e_k=e_k$. Since the vectors $\tilde e_k$ are orthonormal, this map is an isometry from span$\{\tilde e_k\}_{k\in\Z}$ onto $\ell^2$. Since $\mathbf U$ is densely defined, it is unitary.  Moreover, by \eqref{jsmpcoef0},
$
\mathbf U A=J \mathbf U,
$
where $J$ is the Jacobi matrix with coefficient sequences $\{a_k, b_{k-1}\}$. Thus, the triples
$\{\ell^2, A, \{e_{-1},\tilde e_0\}\}$ and $\{\ell^2, J, \{e_{-1}, e_0\}\}$ are equivalent, or, in other words, $J=\mathcal F(A)$.

By definition we have the following relation between the bases $\{\tilde e_{k,A}\}_{k\in\Z}$ and $\{\tilde e_{k,j(A)}\}_{k\in\Z}$
$$
\tilde e_{k,A}= \mathcal U(A)S\tilde e_{k-1,j(A)}.
$$
Therefore, $\mathcal F(j(A))=S^{-1}J S$, that is, $j(A)=\mathcal J(A)$.
\end{proof}

\subsection{Main Lemma}

Recall that the Killip-Simon functional on SMP matrices has the following form:
\begin{equation}\label{eq1}
H(A)=\frac 1 2 \tr(v^{(0)})^2+\sum_j\{(v_j^{(1)})^2-1-\log (v_j^{(1)})^2\},
\end{equation}
where
$$
V(A)=S^{-2}v^{(1)}+v^{(0)}+v^{(1)}S^2, \quad v^{(0)}=S^{-1}v^{(0,1)}+v^{(0,0)}+v^{(0,1)}S.
$$
Note that
$$
v^{(1)}=\diag\{\begin{matrix}\dots& \frak ar_{-1}& -\rho_0&\frak ar_1&-\rho_2&\dots \end{matrix}\}.
$$
We will work with the half-axis functional
\begin{equation}\label{eq2}
\begin{split}
H_+(A)&=\frac 1 2 [(v_0^{(1)})^2+(v_1^{(1)})^2-2-\log (v_0^{(1)})^2-\log (v_1^{(1)})^2]+
\frac 1 2 (v_0^{(0,1)})^2\\
&+\sum_{j\ge 2}\{(v_j^{(1)})^2-1-\log (v_j^{(1)})^2\}
+\sum_{j\ge 1}(v_j^{(0,1)})^2
+\frac 1 2 \sum_{j\ge 0}(v_j^{(0,0)})^2.
\end{split}
\end{equation}

\begin{remark}
The functional \eqref{eq1} is related to the formal "trace" (the sum of all diagonal entries) of the operator 
$V(A)^2$. In its turn, \eqref{eq2} is related to its restriction to the positive half-axis, that is, to the operator 
$P_+ V(A)^2P_+$, where $P_+$ is the orthogonal projector from $\ell^2$ onto $\ell^2_+$.
\end{remark}

 In what follows, it is convenient to rewrite \eqref{eq2} into another form. Let us make the block decomposition of $V(A)$ in $2\times 2$ blocks
 \begin{equation}\label{eq11}
 V(A)=\begin{bmatrix}\ddots&\ddots&\ddots & & &\\
 &\fv^*_{-1}&\fw_{-1}&\fv_0 & & \\
  & &\fv^*_{0}&\fw_{0}&\fv_1 & \\
& &  & \ddots&\ddots\ddots
 \end{bmatrix},
 \end{equation}
 where
 $$
 \fv_k=\begin{bmatrix} v^{(1)}_{2k}&0\\ v^{(0,1)}_{2k}&v^{(1)}_{2k+1} \end{bmatrix}, \ \ 
  \fw_k=\begin{bmatrix} v^{(0,0)}_{2k}&v^{(0,1)}_{2k+1}\\ v^{(0,1)}_{2k+1}&v^{(0,0)}_{2k+1} \end{bmatrix}.
$$
Then
\begin{equation}\label{eq10}
H_+(A)=\frac 1 2\sum_{j\ge 0}\{\tr \fw_j^2+ \tr \fv_j^* \fv_j
+\tr \fv_{j+1} \fv_{j+1}^*
-4-\log\prod_{l=0}^3( v^{(1)}_{2j+l})^2\}.
\end{equation}

\begin{lemma}
Let $A^{(1)}=\cJ A$, where $\cJ$ is the Jacobi flow on SMP matrices. Define
\begin{equation}\label{eq5bis}
\begin{split}
\delta_J H_+(A)=&\frac{1} 2\{\langle V(\cJ A)e_{-1}, V(\cJ A)e_{-1}\rangle-2-\log((\cJ v)^{(1)}_{-1})^2((\cJ v)^{(1)}_1)^2\}\\
=&\frac 1 2 \{(a r^{(1)}_{-1})^2+(a r^{(1)}_{1})^2-2-\log(a r^{(1)}_{-1})^2(a r^{(1)}_1)^2\}\\
+&
\frac{(ap^{(1)}_{-1}-\pi^{(1)}_{-1})^2+(ap^{(1)}_{0}-\pi^{(1)}_{0})^2}{2}
+\frac{(aq^{(1)}_{-1}+b-\sigma^{(1)}_{-1})^2}{2}.
\end{split}
\end{equation}
Then
\begin{equation}\label{eq5}
H_+(A)=H_+(\cJ A)+\delta_J H_+(A).
\end{equation}
\end{lemma}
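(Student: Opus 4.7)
My plan is to split $H_+$ into a quadratic part and a logarithmic/constant part, verify \eqref{eq5} for each separately, and sum. Unpacking the block form \eqref{eq10} at the level of matrix entries of $V(A)$ shows that the quadratic part of $H_+(A)$ equals $\tfrac12\tr P_+V(A)^2P_+$, where $P_+$ is the orthogonal projection onto $\ell^2_+$, while the logarithmic piece is
\[
L(A)=-\tfrac12\log\bigl[(v^{(1)}_0)^2(v^{(1)}_1)^2\bigr]-\sum_{k\ge 2}\log(v^{(1)}_k)^2.
\]
The (formally divergent) constant contributions are $A$-independent and cancel in the difference $H_+(A)-H_+(\cJ A)$.

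For the quadratic part, $V(z)=\frak a z+\frak b-z^{-1}$ is a rational function of $z$, so the Jacobi-flow relation $\cJ A=S^{-1}\mathcal U(A)^*A\,\mathcal U(A)S$ lifts to $V(\cJ A)=S^{-1}\mathcal U(A)^*V(A)\,\mathcal U(A)S$. Each $2\times 2$ block of $\mathcal U(A)$ is supported on $\mathrm{span}\{e_{2m},e_{2m+1}\}$, which lies entirely in $\ell^2_+$ for $m\ge 0$ and in $\ell^2_-$ for $m\le -1$; hence $\mathcal U(A)$ commutes with $P_+$. Combining this with $SP_+S^{-1}=P_+-\langle\,\cdot\,,e_0\rangle e_0$, $\mathcal U(A)e_0=\tilde e_0$, and cyclicity of the trace gives
\[
\tr P_+V(\cJ A)^2=\tr\bigl(\mathcal U(A)SP_+S^{-1}\mathcal U(A)^*\bigr)V(A)^2=\tr P_+V(A)^2-\|V(A)\tilde e_0\|^2.
\]
Since $V(\cJ A)e_{-1}=S^{-1}\mathcal U(A)^*V(A)\tilde e_0$ and $S,\mathcal U(A)$ are unitary, $\|V(A)\tilde e_0\|=\|V(\cJ A)e_{-1}\|$, so the quadratic contribution to $H_+(A)-H_+(\cJ A)$ is exactly $\tfrac12\langle V(\cJ A)e_{-1},V(\cJ A)e_{-1}\rangle$, matching the first line of \eqref{eq5bis}.

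For the log-constant part, the rank-one factorisation of the super-diagonal $2\times 2$ blocks used in the proof of Theorem \ref{def:unitary}, applied to $A$, yields the telescoping ratio
\[
r^{(1)}_{2k+1}=\frac{a_{k+1}}{a_k}\,r_{2k+1},\qquad a_k=\sqrt{p_{2k}^2+r_{2k+1}^2},
\]
and the same block manipulation applied to $A^{-1}$ yields the (less transparent) transformation for $\rho^{(1)}_{2k}$ needed for the even-indexed $v^{(1)}$'s. Substituting these into $L(A)-L(\cJ A)$, working on truncations $H^{(N)}_+(A):=\sum_{j=0}^{N}h_j(A)$, the body of the sum telescopes; the surviving boundary contribution at the window edge, combined with the way the per-block ``$-4$'' constants in \eqref{eq10} realign under the shift by $S$, reduces to $-1-\tfrac12\log\bigl((\cJ v)^{(1)}_{-1}\bigr)^2\bigl((\cJ v)^{(1)}_1\bigr)^2$, completing \eqref{eq5}.

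The main obstacle is the bookkeeping in the logarithmic step: the sums defining $L(A)$ and $L(\cJ A)$ are individually divergent, so the truncation/limit argument must carefully match the ``end of window'' terms between $A$ and $\cJ A$; the even-indexed transformation for $\rho^{(1)}_{2k}$ has to be extracted from the odd-aligned block of $\mathcal U(A)^*A^{-1}\mathcal U(A)$, producing a formula involving $p,r,\pi,\rho$ simultaneously rather than a simple ratio.
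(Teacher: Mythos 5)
Your quadratic-part idea is essentially the paper's own observation (adding $\tfrac12\langle V(\cJ A)e_{-1},V(\cJ A)e_{-1}\rangle$ turns $H_+(\cJ A)$ into the block trace of $P_+\,\mathcal U(A)^{-1}V(A)^2\mathcal U(A)\,P_+$), but your treatment of the logarithmic part has a genuine gap. The entire non-trivial content of the lemma is the exact per-window identity \eqref{eq6}, $r^{(1)}_{2n-1}\rho^{(1)}_{2n}=\rho_{2n}r_{2n+1}$, equivalently $\prod_{l=0}^{3}(v^{(1)}_{2j+l})^2=\prod_{l=-1}^{2}((\cJ v)^{(1)}_{2j+l})^2$. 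This needs, besides \eqref{eq4}, the even-index formula \eqref{eq7}, which the paper gets by computing $\rho_0^{(1)}=\langle A^{-1}\mathcal U(A)e_1,\mathcal U(A)e_{-1}\rangle$ and then invoking the SMP structure relation tying $\pi_0$ to $\rho_0 p_{-2}/r_{-1}$; after that simplification $\rho^{(1)}_{2n+2}/\rho_{2n+2}$ \emph{is} a simple ratio, contrary to your closing claim that it ``involves $p,r,\pi,\rho$ simultaneously rather than a simple ratio.'' You neither state nor prove this identity, and without it your asserted telescoping of $L(A)-L(\cJ A)$ is unsubstantiated; this is exactly the step that cannot be waved through.

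Second, the divergence bookkeeping is misattributed. When $H_+<\infty$ one has $(v^{(1)}_j)^2\to1$, so your quadratic and logarithmic pieces are each divergent, and the manipulation $\tr P_+V(\cJ A)^2=\tr P_+V(A)^2-\|V(A)\tilde e_0\|^2$ is an identity of infinite traces. If you carry out your own truncation plan (cut both sums in \eqref{eq10} at block $N$), the quadratic difference is $\tfrac12\|V(\cJ A)e_{-1}\|^2-\tfrac12\|V(\cJ A)e_{2N+1}\|^2$, and the far-edge term tends to $1$ (the two outer-diagonal entries squared tend to $1$, the others to $0$), while the far-edge log terms tend to $0$. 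That is where the ``$-1$'' in \eqref{eq5bis} comes from; it does \emph{not} come from a ``realignment of the per-block $-4$ constants,'' which cancel exactly at any common truncation. So your two intermediate claims (quadratic difference exactly $\tfrac12\|V(\cJ A)e_{-1}\|^2$, log difference equal to $-1-\tfrac12\log((\cJ v)^{(1)}_{-1})^2((\cJ v)^{(1)}_{1})^2$) are individually incorrect even though their sum is the right answer. The paper sidesteps all of this by never splitting: it identifies $H_+(\cJ A)+\delta_JH_+(A)$ with \eqref{eq10} for $A$ summand by summand, using invariance of the trace of each $2\times2$ diagonal block under conjugation by $U_j$ together with \eqref{eq6}, so no divergent series or boundary limits ever appear.
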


\begin{proof}
 Recall that $\cJ A=S^{-1}\mathcal U(A)^{-1}A\mathcal U(A)S$. So, in the sum \eqref{eq5}
we make a restriction of $V(\cJ A)^2$ not to $\ell^2_+$, but to its one dimensional extension, that is, to
  span$\{e_n\}_{n\ge -1}=S^{-1}\ell_+^2$. In other words, the functional
 $$
\Lambda:= H_+(\cJ A)+\frac{1} 2\{\langle V(\cJ A)e_{-1}, V(\cJ A)e_{-1}\rangle-2-
\log((\cJ v)^{(1)}_{-1})^2((\cJ v)^{(1)}_1)^2\}
 $$
 deals with the formal trace of the operator 
 $$
  P_+SV(\cJ A)^2S^{-1}P_+=
 P_+\mathcal U(A)^{-1}V(A)^2\mathcal U(A)P_+.
 $$
 
Using the decomposition \eqref{eq11} we rewrite $\Lambda$ as
\begin{equation*}
\begin{split}
\Lambda=&\frac 1 2\sum_{j\ge 0} \tr (U_{j}^{-1}\fw_j^2 U_j)\\
+&\frac 1 2\sum_{j\ge 0}\{  \tr(U_j^{-1}\fv_j^* \fv_j U_j)
+\tr(U^{-1}_j\fv_{j+1} \fv_{j+1}^* U_j)
-4-\log\prod_{l=-1}^2((\cJ v)^{(1)}_{2j+l})^2\}\\
=&\frac 1 2\sum_{j\ge 0}\{\tr \fw_j^2+ \tr \fv_j^* \fv_j
+\tr \fv_{j+1} \fv_{j+1}^*
-4-\log\prod_{l=-1}^2((\cJ v)^{(1)}_{2j+l})^2\}.
\end{split}
\end{equation*}

It remains to show that
\begin{equation}\label{eq6}
(\cJ v)^{(1)}_{2j-1}(\cJ v)^{(1)}_{2j}=v^{(1)}_{2j}v^{(1)}_{2j+1},\quad \text{i.e.,}\quad
r^{(1)}_{2n-1}\rho_{2n}^{(1)}=\rho_{2n}r_{2n+1}.
\end{equation}
Recall that
\begin{equation}\label{eq4}
r_{2n+1}^{(1)}=r_{2n+1}\sqrt{\frac{p_{2n+2}^2+r_{2n+3}^2}{{p_{2n}^2+r_{2n+1}^2}}}.
\end{equation}
To find a similar relation between the $\rho_k$'s and  the $\rho^{(1)}_k$'s we use
$$
(A^{(1)})^{-1}=S^{-1} \mathcal U(A)^{-1}A^{-1} \mathcal U(A) S.
$$
This implies
$$
\rho_0^{(1)}=\langle A^{-1} \mathcal U(A)e_1,\mathcal U(A) e_{-1}\rangle=\frac{
\begin{bmatrix} r_{-1}&-p_{-2}\end{bmatrix}
\begin{bmatrix}
\rho_0& 0\\ \pi_0& 0
\end{bmatrix} \begin{bmatrix} r_1\\ -p_0\end{bmatrix}}{\sqrt{p_0^2+r_1^2}\sqrt{p_{-2}^2+r_{-1}^2}}
$$
$$
=\frac{r_1(\rho_0 r_{-1}-\pi_0 p_{-2})}{\sqrt{p_0^2+r_1^2}\sqrt{p_{-2}^2+r_{-1}^2}}
=\frac{r_1\rho_0(r_{-1}^2+p_{-2}^2)}{r_{-1}\sqrt{p_0^2+r_1^2}\sqrt{p_{-2}^2+r_{-1}^2}}
=\frac{r_1\sqrt{p_{-2}^2+r_{-1}^2}}{r_{-1}\sqrt{p_0^2+r_1^2}}\rho_0.
$$
Thus, for arbitrary $n$,
\begin{equation}\label{eq7}
\rho_{2n+2}^{(1)}=\frac{r_{2n+3}\sqrt{p_{2n}^2+r_{2n+1}^2}}{r_{2n+1}\sqrt{p_{2n+2}^2+r_{2n+3}^2}}\rho_{2n+2}.
\end{equation}
Combining \eqref{eq4} and \eqref{eq7} we get \eqref{eq6}. Therefore, due to \eqref{eq10}, we obtain 
$\Lambda=H_+(A)$, indeed.
\end{proof}

\end{document}